\newcommand{\kD}{\ensuremath{k(\bar{D})}}
\newcommand{\lB}{\ensuremath{l(\bar{B})}}
\newcommand{\Gl}{\ensuremath{\operatorname{GL}_n(q)}}
\newcommand{\Gu}{\ensuremath{\operatorname{GU}_n(q)}}
\newcommand{\Gleps}{\ensuremath{\operatorname{GL}_n(\varepsilon q)}}
\newcommand{\Sleps}{\ensuremath{\operatorname{SL}_n(\varepsilon q)}}
\newcommand{\Glw}{\ensuremath{\operatorname{GL}_{wd}(q)}}
\newcommand{\Glwo}{\ensuremath{\operatorname{GL}_{w}(\varepsilon q)}}
\newcommand{\Glweps}{\ensuremath{\operatorname{GL}_{wd}(\varepsilon q)}}
\newcommand{\Sl}{\ensuremath{\operatorname{SL}_n(q)}}
\newcommand{\Su}{\ensuremath{\operatorname{SU}_n(q)}}
\newcommand{\Slweps}{\ensuremath{\operatorname{SL}_{w}(\varepsilon q)}}
\renewcommand{\mod}{\operatorname{mod}}
\theoremstyle{definition}
\newtheorem{definition}{Definition}
\newtheorem{Remark}[definition]{Remark}
\theoremstyle{plain}
\newtheorem{theorem}[definition]{Theorem} 
\newtheorem{lemma}[definition]{Lemma}
\newtheorem*{theorem*}{Theorem}
\numberwithin{definition}{section}
\title{On the Malle-Navarro Conjecture for 2- and 3-blocks of general linear and unitary groups}
\author{Sofia Brenner}
\date{}
\begin{document}

\maketitle	

\begin{abstract}
The Malle-Navarro conjecture relates central block theoretic
invariants in two inequalities. In this paper, we prove the conjecture for the $2$-blocks and the unipotent $3$-blocks of the general linear and unitary groups in non-defining characteristic.
Moreover, we show that the conjecture holds for the unipotent $3$-blocks of quotients of
central subgroups of the special linear and unitary groups.
\end{abstract}

Let $G$ be a finite group and consider a block $B$ of the group algebra $FG$ over a field $F$ of characteristic $\ell > 0$. Let $k(B)$ denote the number of irreducible (ordinary) characters in $B$ and $l(B)$ its number of irreducible Brauer characters. Let $D$ be a defect group of $B$ and $k(D)$ the number of conjugacy classes of $D$ and similarly for its derived subgroup $D'$. Finally, $k_0(B)$ denotes the number of characters of height zero in $B$. In \cite{MAL06}, it is conjectured that the following two inequalities hold:

\begin{equation}
k(B) \leq k_0(B) \cdot k(D') \tag{C1}
\end{equation}

and 
\begin{equation}
k(B) \leq l(B) \cdot k(D) \tag{C2}.
\end{equation}

The conjecture was proven in \cite{MAL18} for the $\ell$-blocks
of finite quasi-simple groups where $\ell \geq 5$ is a prime as well as for covering groups of
alternating, sporadic and simple groups of Lie type in defining characteristic. Here, we examine the conjecture for general linear and unitary groups for $\ell \in \{2,3\}$ in non-defining characteristic. Moreover, we treat the special linear and unitary groups in case $\ell = 3$. Our main result is: 

\begin{theorem*}
Let $q = p^f$ for some prime $p$ and $f \geq 1$. 
\begin{enumerate}
\item Assume that $p$ is odd. Then (C1) and (C2) hold for the $2$-blocks of the general linear groups $\Gl$ and the general unitary groups $\Gu$. 
\item Let $p \neq 3$. Then (C1) and (C2) hold for the unipotent $3$-blocks of $\Gl$ or $\Gu$. Moreover, both inequalitites hold for groups of the form $G/Z$ where $G$ denotes either the special linear group $\Sl$ or the special unitary group $\Su$ and $Z \leq Z(G).$ 
\end{enumerate} 
\end{theorem*}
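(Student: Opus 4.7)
The plan is to reduce both inequalities, via the Fong-Srinivasan parameterization of $\ell$-blocks in non-defining characteristic together with Jordan decomposition of blocks, to a statement about unipotent $\ell$-blocks of a single general linear or unitary group. Concretely, every $\ell$-block $B$ of $\Gleps$ is labeled by a pair $(s, B_u)$, where $s$ is a semisimple $\ell'$-element in the dual group and $B_u$ is a unipotent block of $C_{G^*}(s)$; this centralizer is a direct product of groups of the form $\operatorname{GL}_{n_i}(\varepsilon_i q^{d_i})$, and $B$ shares its invariants $k$, $l$, $k_0$ and its defect group with $B_u$. Since both (C1) and (C2) are multiplicative under direct products, it suffices to verify them for unipotent $\ell$-blocks of a single such factor.

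For a unipotent $\ell$-block $B$ of $\Gleps$ of weight $w$ and $e$-core $\kappa$, where $e$ is the multiplicative order of $\varepsilon q$ modulo $\ell$, I would use the known combinatorial formulas for the invariants: $l(B) = p(w)$, while $k(B)$ and $k_0(B)$ are expressed in terms of $e$-quotients of partitions of $w$. The defect group $D$ is isomorphic to a Sylow $\ell$-subgroup of $C_{q^e - \varepsilon'}\wr S_w$ for a suitable sign $\varepsilon'$, which is an iterated wreath product built from a cyclic $\ell$-group and Sylow $\ell$-subgroups of the relevant $S_{\ell^i}$. I would then compute $k(D)$ via the standard conjugacy-class formula for wreath products and determine $k(D')$ by an inductive analysis of the derived subgroup, so that (C1) and (C2) reduce to combinatorial inequalities in $w$, $e$, and $\ell$, which can then be checked against standard bounds on $p(w)$.

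For the quotients $\Sleps/Z$ in part (2), Clifford theory along the exact sequence $1 \to \Sleps \to \Gleps \to \Gleps/\Sleps \to 1$ relates unipotent $3$-blocks of $\Sleps$, and then of $\Sleps/Z$, to unipotent $3$-blocks of $\Gleps$. The numbers $k$, $l$, $k_0$ and the defect group change in a controlled way governed by $\gcd(n, q-\varepsilon)$ and the order of $Z$, so the $\Gleps$-result supplies the base case. The main obstacle will be the precise determination of $k(D')$ in part (1): the Sylow $2$-subgroups of $C_{q^e - \varepsilon'}\wr S_w$ are non-abelian iterated wreath products whose derived subgroup depends subtly on the $2$-adic expansion of $w$, so counting its conjugacy classes requires a careful recursion. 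A secondary difficulty is to control the Clifford-theoretic ramification when passing to $\Sleps/Z$, where characters of $\Gleps$-blocks can split under restriction and potentially weaken (C1); ensuring compatibility will require a case analysis based on the $3$-part of $\gcd(n, q-\varepsilon)$ and the structure of $Z$.
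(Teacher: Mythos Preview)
Your high-level strategy matches the paper's almost exactly: the reduction of arbitrary $\ell$-blocks of $\Gleps$ to principal (equivalently, unipotent) blocks via the semisimple label and the product structure of centralizers is precisely the content of the paper's opening Remark, and the combinatorial description of $k(B)$, $k_0(B)$, $l(B)$, and of the defect group as an iterated wreath product is the machinery the paper uses throughout. For $\Sleps/Z$ the paper likewise argues by Clifford theory along $\Sleps \hookrightarrow \Gleps$, bounding $k(B_Z)$, $k_0(B_Z)$, $l(B_Z)$, $k(D_Z)$, $k(D_Z')$ against the corresponding $\Gleps$-invariants up to factors controlled by $3^a$ and $3^m$ with $3^m=\gcd(w,q-\varepsilon)_3$, exactly as you anticipate.

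There is one structural point you have not flagged which the paper treats as a genuine case split. For $\ell=2$ the Sylow $2$-subgroup of $\Glwo$ is \emph{not} always of the shape $C_{2^a}\wr C_2\wr\cdots\wr C_2$: this holds only when $\varepsilon q\equiv 1\ \mathrm{mod}\ 4$. When $\varepsilon q\equiv 3\ \mathrm{mod}\ 4$ (so $a=1$) the base piece is the semidihedral group $\operatorname{SD}_{2^{\tilde a+2}}$ rather than a cyclic group, and the formula for $k(B)$ changes accordingly (it involves $k(2^{\tilde a}-1,w_1)$ in one coordinate rather than $k(2^{a-1},w_i)$ uniformly). The paper handles this by a separate recursion (its Lemma establishing a bound of the form $k(B)\le 2^{(\tilde a-y)(w-a_0)/2+\cdots}$) and separate estimates for $k(P_{2^i})$ and $k(P_{2^i}')$. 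Your description of $D$ as a Sylow of $C_{q^e-\varepsilon'}\wr S_w$ glosses over this, and the ``subtle dependence on the $2$-adic expansion of $w$'' you mention is only half of the difficulty.

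Beyond that, the proposal is a correct outline but stops short of the actual content: the paper's work lies entirely in proving sharp enough upper bounds for the multipartition numbers $k(s,t)$ (of the type $k(2^a,w)\le 2^{(a-4/3)w+3}$, $k(b,w)\le 3^{(a-5/6)w+2}$, etc.) and matching lower bounds for $k(D)$ and $k(D')$, followed by a rather long case analysis in small $a$ and small $w$. Saying the inequalities ``can then be checked against standard bounds on $p(w)$'' understates this; none of the required bounds are off-the-shelf, and several (notably for $a\in\{1,2\}$) require bespoke recursions and computer verification of base cases.
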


Since the proof for the classical groups of \cite{MAL18} also includes the case $\ell =3$, it only remains to consider the $3$-blocks of the exceptional groups as well as the non-unipotent blocks to prove the conjecture for all finite quasi-simple groups of Lie type in this case. 
\bigskip

This paper is organized as follows: In the first chapter, we present some reduction theorems as well as combinatorial formulas for the block theoretic invariants given above. In the subsequent chapter, we prove the conjecture for the general linear and unitary groups first for the case $\ell= 3$ and a particular case for $\ell =2$, since they behave similarly, before treating the second case for $\ell = 2$ separately. In the last chapter, we consider the special linear and unitary groups for $\ell = 3$. 
\bigskip

\section{Preliminaries and Reductions}
In this chapter, we introduce some first reductions as well as the notation, which will be similar to that in \cite{MAL18}. Throughout, let $\ell \in \{2,3\}$ and $q = p^f$ for a prime $p \neq \ell$ and some $f \geq 1$. As customary, we write $\operatorname{GL}_n(-q)$ for the general unitary group $\Gu$ (and similarly for the special linear group). For $\varepsilon \in \{\pm 1\}$, let $d$ be the order of $\varepsilon q$ modulo $\ell$ (so for $\ell = 2$, we simply have $d = 1$) and $\ell^a$ be the exact power of $\ell$ dividing $(\varepsilon q)^d -1$. 

\begin{Remark}
Let $B$ be a $2$-block of $G = \Gleps$, where $\varepsilon \in \{\pm 1\}$.  The block theoretic invariants of $B$ are the same as for the principal block $B'$ of $C_G(s)$, where $s$ is the semisimple $2'$-element of $G$ corresponding to $B$ by \cite[Prop. 3.4]{BRO86} (see \cite[Cor. 6.4]{GEC91} for the Brauer characters). By \cite[Chapter 1]{FON82}, this is in turn a product of general linear and unitary groups. Since all occurring invariants are multiplicative, it suffices to consider the principal block of $\Gleps$ for the purpose of proving our main theorem. For $ \ell = 3$ and $B$ a unipotent block of $FG$, there exists a weight $w > 0$ such that the block theoretic invariants of $B$ are the same as those of the principal block $B'$ of $\operatorname{GL}_{wd}(\varepsilon'q)$ for some $\varepsilon' \in \{\pm 1\}$ (cf. \cite[Thm 1.9]{MIC83}).
\end{Remark}

In the following, we therefore may assume that $B$ is the principal $\ell$-block of $G = \Glweps$, where $q$ is not divisible by $\ell$, $\varepsilon \in \{\pm 1\}$ and $w \geq 1$. 
\bigskip

Let $s \in \mathbb{Z}_{> 0}$ and $t \in \mathbb{Z}_{\geq 0}$. By $\pi(t)$, we denote the number of \emph{partitions} of $t$ and write $|\lambda| = t$ if $\lambda$ is a partition of $t$. By $k(s,t)$, we denote the number of $s$-\emph{multipartitions} of $t$, that is, the number of tuples $(\mu, \ldots, \mu_s)$ of partitions $\mu_1, \ldots, \mu_s$ such that $|\mu_1| + \ldots + |\mu_s| = t$. Furthermore, we define an $\ell$-\emph{decomposition} of $t$ to be a tuple $(t_0, \ldots, t_k)$ of nonnegative integers $t_0, \ldots, t_k$ such that $\sum_{i = 0}^k t_i \ell^i = t$ and $t_k \neq 0$. The set of $\ell$-decompositions of $t$ will be denoted by $W_t$ and its cardinality by $p_\ell(t)$. Furthermore, an ordered tuple $(t_1, \ldots, t_s)$ of nonnegative integers with $t_1 + \ldots + t_s = t$ is called an $s$-\emph{split} of $t$ (write $\lambda \Vdash t$). 
\bigskip

For any natural number $n$ and a prime number $r$, denote by $n_r$ the largest power of $r$ dividing $n$. Let $w = \sum_{i = 0}^v a_i \ell^i$ be the $\ell$-adic decomposition of $w$. We recall the values of some invariants: 

\begin{lemma}\label{lem:kBformula}
Let $2^{\tilde{a}} = (q+ \varepsilon)_2$. For the principal $2$-block $B$ of $\Glwo$, it holds that
\begin{equation}\label{eq:formulakB2}
k(B) = \sum_{ \textbf{w} \in W_w} k(2^a, w_0) \, k(2^{a+\tilde{a}-1}-2^{a-1},w_1) \cdot \prod_{i \geq 2} k(2^{a+\tilde{a}-2}, w_i).
\end{equation}
Let $b = d + \frac{3^a - 1}{d}$ and $b_1 = 2 \cdot \frac{3^{a-1}}{d}$. For the principal $3$-block $B$ of $\Glw$, it holds that
\begin{equation}\label{eq:formulakB3}
k(B) = \sum_{\textbf{w} \in W_w} k(b, w_0) \cdot \prod_{i \geq 1} k(b_1, w_i) =: k(3,a,d,w).
\end{equation} 
\end{lemma}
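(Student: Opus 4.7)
The plan is to derive both formulas by combining the Jordan decomposition of characters with the Broué--Michel description of $\ell$-blocks of finite reductive groups. Continuing from the reduction in the preceding Remark, I parametrize $\Irr(B)$ by pairs $(s, \lambda)$ where $s$ runs over the $G$-conjugacy classes of semisimple $\ell$-elements of $G$ (identifying $G$ with its dual) and $\lambda$ over the unipotent characters of $C_G(s)$ lying in its principal unipotent $\ell$-block. The centralizer decomposes as $C_G(s) \cong \prod_O \operatorname{GL}_{m_O}(\varepsilon q^{|O|})$, where $O$ ranges over Frobenius orbits of eigenvalues of $s$ with common multiplicity $m_O$, subject to $\sum_O |O|\, m_O = wd$.

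Next, I enumerate the Frobenius orbits by the level $k$ of their roots (primitive $\ell^k$-th roots of unity): the orbit size equals the multiplicative order of $\varepsilon q$ modulo $\ell^k$, and there are $\phi(\ell^k)/|O|$ orbits at each level. For $\ell = 3$ the orbit sizes are $1$ at level $0$, $d$ for $1 \leq k \leq a$, and $d \cdot 3^{k-a}$ for $k > a$. For $\ell = 2$ one splits into the subcases $\varepsilon q \equiv 1 \pmod 4$ and $\varepsilon q \equiv 3 \pmod 4$, which yield different orbit patterns at low levels. In every case, for orbits of positive level we have $\varepsilon q^{|O|} \equiv 1 \pmod \ell$, so every partition of $m_O$ labels a unipotent character in the principal $\ell$-block of $\operatorname{GL}_{m_O}(\varepsilon q^{|O|})$, contributing $\pi(m_O)$; the only exceptional factor is the level-$0$ orbit in the $\ell = 3$ case, which contributes $k(d, m_O/d)$ when $d \mid m_O$ and zero otherwise.

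The last step is to group orbits by size $|O| = \ell^i$ and apply the convolution identity
\begin{equation*}
\sum_{m_1 + \cdots + m_N = m} \prod_{j=1}^N k(s_j, m_j) \;=\; k(s_1 + \cdots + s_N, m),
\end{equation*}
which follows immediately from the definition of multipartitions. Writing $w_i$ for the total contribution to $w$ from orbits of size $\ell^i$ transforms the constraint $\sum_O |O|\, m_O = wd$ into $\sum_i w_i \ell^i = w$, i.e.\ an $\ell$-decomposition $\mathbf{w} \in W_w$. The coefficients in the statement then arise by totalling orbit counts: for $\ell = 3$, combining the eigenvalue-$1$ factor with orbits at levels $1, \ldots, a$ gives $d + \sum_{k=1}^{a} 2 \cdot 3^{k-1}/d = d + (3^a-1)/d = b$, while each higher level contributes $2 \cdot 3^{a-1}/d = b_1$; for $\ell = 2$, geometric summation in each subcase produces $2^a$ at level $0$, $2^{a+\tilde a - 1} - 2^{a-1}$ at level $1$, and $2^{a+\tilde a - 2}$ at every higher level.

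The main obstacle is the case distinction for $\ell = 2$: the subcases $\varepsilon q \equiv \pm 1 \pmod 4$ produce qualitatively different orbit patterns (in one subcase almost every orbit has size $1$ up to level $a$, while in the other orbits begin to have size $2$ from level $2$ onward), and verifying that both subcases yield the same unified coefficients $2^{a+\tilde a - 1} - 2^{a-1}$ (for $i = 1$) and $2^{a+\tilde a - 2}$ (for $i \geq 2$) requires separate careful summation in each subcase together with the observation that, in either regime, exactly $2^{a-1}$ orbits are "removed" from the total $2^{a + \tilde a - 1}$ contributing at level $1$ by the presence of the eigenvalues $\pm 1$ at level $\leq 1$.
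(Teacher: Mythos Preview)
The paper offers no argument here: it simply cites \cite[Prop.~2.39 and Lemma~2.44]{GRU18} for $\ell = 2$ and \cite[Prop.~6]{OLS84} for $\ell = 3$. Your proposal supplies precisely the derivation those references carry out --- parametrize $\Irr(B)$ via Jordan decomposition by pairs $(s,\lambda)$ with $s$ a semisimple $\ell$-element and $\lambda$ a unipotent character in the principal block of $C_G(s)$, stratify the eigenvalues of $s$ by Frobenius-orbit level, and collapse the orbit counts at each level using the convolution identity $k(s_1,\cdot)*k(s_2,\cdot)=k(s_1+s_2,\cdot)$ --- and it is correct. One small wording issue: for $\ell = 3$ and $d = 2$ the orbit sizes are $1, 2, 6, 18, \ldots$ rather than powers of $3$, so your phrase ``orbits of size $\ell^i$'' should be read as grouping by the exponent $i$ in $|O| = d\cdot 3^i$ (with the level-$0$ eigenvalue absorbed into the $i=0$ group); after this regrouping and division of the constraint $\sum_O |O|\,m_O = wd$ by $d$, the $3$-decomposition $\sum_i w_i\,3^i = w$ falls out exactly as you claim. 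Your separate treatment of the two residues of $\varepsilon q$ modulo $4$ is likewise on target: for $\varepsilon q \equiv 1 \pmod 4$ the size-$1$ orbits occupy levels $0,\ldots,a$ (totalling $2^a$) and level $a+i$ contributes $2^{a-1}$ orbits of size $2^i$, while for $\varepsilon q \equiv 3 \pmod 4$ one has $a=1$, only $\{\pm 1\}$ at size $1$, levels $2,\ldots,\tilde a+1$ giving $2^{\tilde a}-1$ orbits of size $2$, and $2^{\tilde a-1}$ orbits of each size $2^i$ for $i\ge 2$; both specialize to the stated coefficients.
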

\begin{proof}
Cf. \cite[Prop. 2.39 and Lemma 2.44]{GRU18} for $\ell =2$ and \cite[Prop. 6]{OLS84} for $\ell = 3$. 
\end{proof}

Observe that the formulas for $\ell= 2$, $\varepsilon q \equiv 1 \mod 4$ and $\ell = 3$ are similar, so we treat these cases in parallel. Moreover, we write $k^w(B)$ if we want to clarify which value of $w$ is currently examined. 

\begin{lemma}
The number of characters of height zero in the principal $\ell$-block $B$ is given by 
$k_0(B) = 2^{\sum_{i = 0}^v a_i(i+1)}$ for $\ell = 2$ and by $k_0(B) = \prod_{i \geq 0} k(b \cdot 3^i, a_i)$ for $\ell = 3$. 
\end{lemma}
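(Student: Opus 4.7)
The plan is to follow the template of Lemma~\ref{lem:kBformula}, isolating which irreducible characters of the principal $\ell$-block $B$ of $\Glweps$ have height zero via the $d$-quotient parametrization, and then counting these directly.

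First, I would recall the parametrization underlying Equations~\eqref{eq:formulakB2} and~\eqref{eq:formulakB3}: the unipotent characters of $B$ correspond to partitions $\lambda$ of $wd$ with empty $d$-core, equivalently to $d$-quotient multipartitions of $w$. A choice of $\ell$-decomposition $\mathbf{w} = (w_0, \ldots, w_k) \in W_w$ records how the multipartition is distributed across the $\ell$-adic levels on the abacus display. The $\ell$-part of $\chi_\lambda(1)$, and hence the height of $\chi_\lambda$, is controlled by a statistic in the $w_i$ that is minimized precisely when $\mathbf{w}$ equals the $\ell$-adic decomposition $(a_0, a_1, \ldots, a_v)$ of $w$. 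Thus the height-zero characters correspond to a single distinguished summand in the formulas of Lemma~\ref{lem:kBformula}.

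Next I would count the height-zero multipartitions level by level. For $\ell = 3$, the number of multipartitions contributing at level $i$ is $k(b \cdot 3^i, a_i)$, the factors $b$ and $b \cdot 3^i$ reflecting the $d$-splitting together with additional contributions arising at deeper $\ell$-adic levels (paralleling the roles of $b$ and $b_1$ in Equation~\eqref{eq:formulakB3}). Taking the product over $i$ yields $\prod_{i \geq 0} k(b \cdot 3^i, a_i)$. For $\ell = 2$ the $\ell$-adic digits $a_i$ lie in $\{0,1\}$, and the count at level $i$ collapses to a factor $2^{i+1}$ when $a_i = 1$ and to $1$ otherwise, producing the product $2^{\sum_{i=0}^v a_i(i+1)}$.

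The main obstacle is verifying that the height-zero condition really pins $\mathbf{w}$ to the $\ell$-adic decomposition of $w$, and that the per-level multiplicities have the exact form claimed rather than some shifted variant (the off-by-one shifts in Equation~\eqref{eq:formulakB2} already signal that such bookkeeping is delicate). Both points are encoded in the combinatorial height computations of \cite[Prop.~6]{OLS84} for $\ell = 3$ and \cite[Ch.~2]{GRU18} for $\ell = 2$; once these are translated into the notation used here, the lemma reduces to a direct citation, exactly as in the proof of Lemma~\ref{lem:kBformula}.
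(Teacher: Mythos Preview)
Your proposal lands in the same place as the paper: the paper's proof is a bare citation of \cite[Thm.~2.60]{GRU18} for $\ell=2$ and \cite[Prop.~2.13]{MIC83} for $\ell=3$, and you conclude likewise that the lemma reduces to a direct citation. So the overall approach matches.

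One point in your heuristic is imprecise and worth flagging. You say the height-zero characters correspond to a \emph{single distinguished summand} in the formulas of Lemma~\ref{lem:kBformula}, namely the one indexed by the $\ell$-adic digits $(a_0,\ldots,a_v)$. But substituting $w_i=a_i$ into Equation~\eqref{eq:formulakB3} gives $k(b,a_0)\prod_{i\ge 1}k(b_1,a_i)$, which is \emph{not} the claimed $\prod_{i\ge 0}k(b\cdot 3^i,a_i)$; the per-level factors are genuinely different (and similarly for $\ell=2$). The height-zero count uses a finer parametrization than the one underlying the $k(B)$ formula, so it is not literally a summand of the latter. You seem aware of this (your remark about ``additional contributions at deeper $\ell$-adic levels''), but the phrasing suggests a simpler relationship than actually holds. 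Also note that for $\ell=3$ the correct reference is Michler \cite[Prop.~2.13]{MIC83}, not \cite[Prop.~6]{OLS84}; the latter gives the $k(B)$ formula, not $k_0(B)$.
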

\begin{proof}
Cf. \cite[Thm.~2.60]{GRU18} for $\ell = 2$ and \cite[Prop. 2.13]{MIC83} for $\ell = 3$. 
\end{proof}

In the following, denote by $\operatorname{SD}_{2^{\tilde{a}+2}} = \langle x,y \mid x^2 = y^{2^{\tilde{a}+1}} = 1, \, xyx = y^{2^{\tilde{a}}-1} \rangle$ the semidihedral group of order $2^{\tilde{a}+2}.$ The defect groups $D$ of the principal $\ell$-block are Sylow $\ell$-subgroups of $G$ whose structure can be described as follows: 

\begin{lemma}
	\begin{enumerate}
		\item Let $\ell = 2$ and $\varepsilon q \equiv 1 \mod 4$, or $\ell = 3$. Then $D \cong \prod_{i = 0}^v D_{i,\ell^a}^{a_i}$, where $D_{i,\ell^a} = C_{\ell^a} \wr C_\ell \wr \ldots \wr C_\ell$ is the iterated wreath product of the cyclic group $C_{\ell^a}$ with $i$ factors of the cyclic group $C_\ell$. 
		\item If $\ell = 2$ and $\varepsilon q \equiv 3 \mod 4$, then $D \cong \prod_{i = 0}^{a_i} P_{2^i}^{a_i}$, where $P_1 = C_2$ and for $i \geq 1$, we have $P_{2^i} = \operatorname{SD}_{2^{\tilde{a}+2}}\, \wr  \, C_2 \wr \ldots \wr C_2$ with $i-1$ factors of the cyclic group $C_2$. 
	\end{enumerate}
\end{lemma}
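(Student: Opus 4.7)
The plan is to observe that the defect groups of the principal $\ell$-block of $G = \Glweps$ are by definition the Sylow $\ell$-subgroups of $G$, so the lemma reduces to the classical computation of Sylow subgroups of general linear and unitary groups (due to Weir for odd $\ell$ and to Carter and Fong for $\ell = 2$).

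For part (1), I would proceed by a two-stage induction. The base case is that a Sylow $\ell$-subgroup of $\operatorname{GL}_d(\varepsilon q)$ is cyclic of order $\ell^a$: indeed, $\operatorname{GL}_d(\varepsilon q)$ contains a Coxeter torus isomorphic to $\mathbb{F}_{(\varepsilon q)^d}^{\times}$ (or the analogous norm-one torus in the unitary case), so $C_{\ell^a}$ embeds; and a direct order computation using that $(\varepsilon q)^j - 1$ is coprime to $\ell$ for $1 \leq j < d$ (by minimality of $d$) shows that $\ell^a$ is already the full $\ell$-part of $|\operatorname{GL}_d(\varepsilon q)|$. Next I induct on $i$ using the block-permutation embedding $\operatorname{GL}_{\ell^{i-1}d}(\varepsilon q) \wr C_\ell \hookrightarrow \operatorname{GL}_{\ell^i d}(\varepsilon q)$, which realises $D_{i-1,\ell^a} \wr C_\ell = D_{i,\ell^a}$ as a Sylow $\ell$-subgroup of $\operatorname{GL}_{\ell^i d}(\varepsilon q)$ (again verified by an order count). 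Finally, using the $\ell$-adic expansion $w = \sum a_i \ell^i$, the block-diagonal embedding $\prod_i \operatorname{GL}_{\ell^i d}(\varepsilon q)^{a_i} \hookrightarrow \Glweps$ realises $\prod_i D_{i,\ell^a}^{a_i}$ as a subgroup whose order exhausts $|\Glweps|_\ell$, completing part (1).

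For part (2), the induction has the same shape, but the atomic building block is now $\operatorname{GL}_2(\varepsilon q)$ rather than $\operatorname{GL}_1(\varepsilon q)$. Under the hypothesis $\varepsilon q \equiv 3 \bmod 4$, one has $d = 1$, $a = 1$, and the 2-part of $(\varepsilon q)^2 - 1$ is $2^{\tilde{a}+1}$. The Sylow 2-subgroup of $\operatorname{GL}_1(\varepsilon q)$ is simply $C_2$, giving $P_1$; for $P_2$, I would start from a non-split torus of order $(\varepsilon q)^2 - 1$, extract its cyclic Sylow $C_{2^{\tilde{a}+1}}$, and adjoin a Galois-type involution to obtain a group of order $2^{\tilde{a}+2}$. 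The iterated wreath product step and the final reassembly over the 2-adic expansion of $w$ then proceed exactly as in (1). The main obstacle is verifying that the resulting group is genuinely semidihedral, that is, that the involution acts on the cyclic factor by $y \mapsto y^{2^{\tilde{a}}-1}$ rather than by $y \mapsto y^{-1}$ or $y \mapsto y^{2^{\tilde{a}}+1}$ (which would give a dihedral or generalised quaternion group instead). This comes down to an explicit computation of the Frobenius action modulo $2^{\tilde{a}+1}$ on the norm-one subgroup of $\mathbb{F}_{q^2}^{\times}$, and is precisely the point where the assumption $\varepsilon q \equiv 3 \bmod 4$ (as opposed to $\equiv 1$) is used.
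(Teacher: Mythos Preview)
Your sketch is correct and follows the classical route: Weir's description of Sylow $\ell$-subgroups of $\operatorname{GL}_n(q)$ for odd $\ell$, and Carter--Fong's for $\ell=2$. The paper does not actually prove this lemma; it simply points to \cite[p.~18]{GRU18} for $\ell=2$ and \cite[Prop.~5.11]{MAL18} for $\ell=3$, where precisely the argument you outline (base torus, iterated wreath products along the $\ell$-adic expansion) is recorded. So you are supplying the content behind the citations rather than diverging from them.

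One small imprecision: in part~(2) the non-split maximal torus of $\operatorname{GL}_2(q)$ is the full multiplicative group $\mathbb{F}_{q^2}^{\times}$, not its norm-one subgroup, and the involution in the normaliser acts by the $q$-power Frobenius; since $\varepsilon q \equiv 3 \bmod 4$ and $(\varepsilon q+1)_2 = 2^{\tilde a}$ force $\varepsilon q \equiv 2^{\tilde a}-1 \bmod 2^{\tilde a+1}$, this is exactly the semidihedral relation on the cyclic $2$-part $C_{2^{\tilde a+1}}$. With that adjustment your argument goes through as written.
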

\begin{proof}
See \cite[p.18]{GRU18} for $\ell = 2$ and \cite[Prop. 5.11]{MAL18} for $\ell = 3$. 
\end{proof}

\section{General Linear and Unitary Groups}
In this chapter, we prove the inequalities (C1) and (C2) for the general linear and unitary groups $\Gleps$, using the notation from the previous chapter. We first assume $\ell = 3$ or $\varepsilon q \equiv 1 \mod 4$ if $\ell = 2$, that is, $a \geq 2$ and $\tilde{a} = 1$. We begin by deriving bounds for the occurring numbers of multipartitions. 
\begin{lemma}\label{lem:multipartitionsbasics}
\begin{enumerate}
\item For all $s \geq 3$ and $t \geq 1$, it holds that $k(s,t) \leq s^t$. 
\item For all $s\geq 3$ and $t_1, t_2 \geq 1$, it holds that $k(s, t_1 + t_2) \leq k(s,t_1) \cdot k(s,t_2).$ Moreover, for $t \geq 2$, it holds that $k(2,t+1) \leq 2 \cdot k(s,t).$
\end{enumerate}
\end{lemma}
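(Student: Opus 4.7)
For part (1), I would compare the generating function $F_s(x) := \sum_t k(s,t) x^t = \prod_{n \geq 1}(1-x^n)^{-s}$ with $(1-sx)^{-1}$ coefficientwise. Taking formal logarithms gives $[x^m]\log F_s(x) = s\sigma(m)/m$, where $\sigma(m) = \sum_{d \mid m} d$, and $[x^m]\log(1-sx)^{-1} = s^m/m$; the desired bound reduces to $\sigma(m) \leq s^{m-1}$, which holds trivially at $m = 1$ and for $m \geq 2$ via $\sigma(m) \leq m(m+1)/2 \leq 3^{m-1} \leq s^{m-1}$. Since both logarithms have nonnegative coefficients vanishing at the origin, exponentiating preserves the coefficientwise inequality (from $e^g - e^f = e^f(e^{g-f}-1)$ with all three factors having nonnegative coefficients) and yields $k(s,t) \leq s^t$.

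For the second assertion of part (2), I would first collect three auxiliary estimates: the Fibonacci-type bound $\pi(n+1) \leq \pi(n) + \pi(n-1)$ (for instance via Euler's pentagonal recurrence, after checking the first few cases directly), the lower bound $k(2,n) \geq 2\pi(n)$ for $n \geq 1$ (from the splits $(n,0)$ and $(0,n)$), and $k(s,t) \geq k(3,t) \geq k(2,t) + k(2,t-1)$ for $s \geq 3$ and $t \geq 1$ (from the $j = 0, 1$ summands of $k(3,t) = \sum_{j \geq 0} \pi(j) k(2,t-j)$ together with monotonicity of $k(\cdot,t)$ in the first argument). Peeling off the boundary term $j = t+1$ in $k(2,t+1) = \sum_j \pi(j)\pi(t+1-j)$ and applying the Fibonacci bound to each remaining factor yields $k(2,t+1) \leq \pi(t+1) + k(2,t) + k(2,t-1)$; combining with $\pi(t+1) \leq \pi(t) + \pi(t-1) \leq \tfrac12(k(2,t)+k(2,t-1))$ (valid for $t \geq 2$) produces the chain
\[
k(2,t+1) \leq 2\bigl(k(2,t) + k(2,t-1)\bigr) \leq 2 k(3,t) \leq 2 k(s,t).
\]

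The first assertion of part (2), $k(s, t_1+t_2) \leq k(s,t_1) k(s,t_2)$, is the most delicate. My plan is to establish log-concavity of $(k(s,t))_{t \geq 0}$ for $s \geq 3$, i.e.\ $k(s,t)^2 \geq k(s,t-1) k(s,t+1)$; a standard argument (using $k(s,0) = 1$ and the consequent non-increasing character of $b \mapsto k(s,t_1+b)/k(s,b)$) then gives $k(s,t_1+t_2)/k(s,t_2) \leq k(s,t_1)/k(s,0) = k(s,t_1)$, as desired. Verifying log-concavity of the coefficients of $P(x)^s$ for $s \geq 3$ is the main technical hurdle; I would expect to dispatch small $t$ by direct convolution estimates and to handle large $t$ via an analytic or real-rootedness argument, since a purely combinatorial bijection appears harder: the natural "concatenation" map from pairs of $s$-multipartitions of $t_1$ and $t_2$ to an $s$-multipartition of $t_1+t_2$ is not surjective (e.g.\ for $\mu = ((3,3),\emptyset,\emptyset)$ with $t_1=1$), so one must either find a cleverer injection that exploits $s \geq 3$ or appeal to the analytic structure of $F_s(x)$.
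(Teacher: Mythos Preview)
The paper does not prove this lemma itself; it simply cites \cite[Lemma~5.5]{MAL18} for (i) and the first half of (ii), and \cite[Lemma~2.48]{GRU18} for the second half of (ii). So there is no argument in the paper to compare your approaches against, only the question of whether your arguments stand on their own. Your proof of (i) is correct and clean: the coefficient of $x^m$ in $\log F_s(x)$ is indeed $s\sigma(m)/m$, the reduction to $\sigma(m)\le s^{m-1}$ is valid, and the exponentiation step preserves coefficientwise domination for exactly the reason you give. Your proof of the second assertion of (ii) is also correct; in particular, the Fibonacci bound $\pi(n)\le\pi(n-1)+\pi(n-2)$ really does follow from Euler's pentagonal recurrence once one groups the tail terms in consecutive pairs and uses monotonicity of $\pi$ to see the resulting alternating sum is nonpositive. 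One caveat: the statement almost certainly contains a typo and should read $k(2,t+1)\le 2\,k(2,t)$ rather than $2\,k(s,t)$ (this is what is actually used in the proof of Lemma~\ref{lem:multipartitions}), and your chain, which terminates at $2k(3,t)$, does not yield the stronger $k(2,t)$ version.

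The genuine gap is the first assertion of (ii). You correctly observe that log-concavity of $(k(s,t))_{t\ge 0}$ for $s\ge 3$ would imply the submultiplicativity $k(s,t_1+t_2)\le k(s,t_1)k(s,t_2)$, but you do not prove log-concavity, and your sketch (``direct convolution estimates for small $t$'', ``analytic or real-rootedness argument for large $t$'') is not a proof. Real-rootedness is a non-starter here: $F_s$ is not a polynomial, and neither its truncations nor the relevant partial products are real-rooted. Analytic proofs of log-concavity for coefficient sequences of $P(x)^s$ are genuinely delicate --- recall that even for $s=1$, log-concavity of $\pi(n)$ fails for small $n$ and was only settled in general by DeSalvo--Pak. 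The hypothesis $s\ge 3$ is sharp (one has $k(s,1)^2=k(s,0)k(s,2)$ precisely at $s=3$), so any argument must actually use it. As written, this half of (ii) remains unproved in your proposal; you should either extract the argument from \cite{MAL18} or supply a complete one.
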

\begin{proof}
Cf. \cite[Lemma 5.5]{MAL18} and \cite[Lemma 2.48]{GRU18} for the second part of (ii).
\end{proof}

\begin{lemma}\label{lem:k3aw2}
For $s \geq 1$, it holds that $k(s,1) = s$, $k(s,2) = \frac{1}{2}s^2 + \frac{3}{2} s$ and $k(s,3) = \frac{1}{6}s^3 + \frac{3}{2}s^2 + \frac{4}{3} s.$
\end{lemma}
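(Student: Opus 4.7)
The statement is a purely combinatorial identity, and I would prove it by direct enumeration of $s$-multipartitions of $t$ according to which of the $s$ components are nonempty and which partition each occupies. The only external input is $\pi(1)=1$, $\pi(2)=2$, $\pi(3)=3$.

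For $t=1$, exactly one component must equal the partition $(1)$ and all others must be empty; there are $s$ choices of position, giving $k(s,1)=s$.

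For $t=2$, I would split the count into two disjoint cases: either exactly one component is nonempty (a partition of $2$, contributing $\pi(2)\cdot s = 2s$) or exactly two distinct components are nonempty (each necessarily equal to $(1)$, contributing $\binom{s}{2}$). Summing and simplifying yields $2s+\binom{s}{2} = \tfrac{1}{2}s^2+\tfrac{3}{2}s$, as claimed.

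For $t=3$, the analogous case split gives: (i) one component is a partition of $3$, contributing $\pi(3)\cdot s = 3s$; (ii) one component is a partition of $2$ and one other (distinct) component equals $(1)$, contributing $\pi(2)\cdot s(s-1) = 2s(s-1)$; (iii) three distinct components each equal $(1)$, contributing $\binom{s}{3}$. The sum
\[
3s + 2s(s-1) + \binom{s}{3} = \tfrac{1}{6}s^3 + \tfrac{3}{2}s^2 + \tfrac{4}{3}s
\]
is then a routine algebraic simplification. There is no genuine obstacle; the only pitfall is in case (ii), where the two distinguished positions (the one carrying a partition of $2$ and the one carrying $(1)$) are distinguishable, so the count is the ordered $s(s-1)$ rather than $\binom{s}{2}$.
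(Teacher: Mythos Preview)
Your proof is correct and is essentially the same approach as the paper's: the paper uses the identity $k(s,t)=\sum_{(k_1,\ldots,k_s)\Vdash t}\pi(k_1)\cdots\pi(k_s)$ together with $\pi(t)=t$ for $t\in\{1,2,3\}$ and counts the $s$-splits, while you organize the same count by the number of nonempty components. The two computations are the same enumeration, merely grouped differently.
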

\begin{proof}
By \cite[Lemma 1]{OLS84} it holds for all $t \geq 0$ that $k(s,t) = \sum_{(k_1, \ldots, k_s) \Vdash t} \pi(k_1) \cdots \pi(k_s),$ so counting the different $s$-splits of $t \in \{1,2,3\}$ and using $\pi(t) = t$ in this case yields the claim. 
\end{proof}

\begin{lemma}\label{lem:multipartitions}\label{lem:k2tschwach}
Let $w \geq 0$.
\begin{enumerate}
\item It holds that $k(2,w) \leq 2^{w + 0.35}.$
\item For $a \geq 3$, it holds that
$k(2^a, w) \leq 2^{\left(a-\frac{4}{3}\right)w + 3}.$ For $a\geq 5$, we have $k(2^a, w) \leq 2^{\left(a-\frac{4}{3}\right)w + 2}.$
\item For $a \geq 2$, it holds that 
$k(b, w) \leq 3^{\left(a-\frac{5}{6}\right)w + 2- \log_3(d)}$, where $b$ is defined as in Lemma \ref{lem:kBformula}. For $a \geq 3$ and $w \geq 9$, one can omit the summand 2 in the exponent. 

\end{enumerate}
\end{lemma}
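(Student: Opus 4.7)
The plan is to prove each of the three bounds by strong induction on $w$, with the base cases coming from the explicit formulas for $k(s,t)$ in Lemma \ref{lem:k3aw2} (together with a direct enumeration of $s$-splits for slightly larger $t$ when needed), and the inductive step supplied by the submultiplicative inequality $k(s, w_1 + w_2) \leq k(s, w_1)\, k(s, w_2)$ from Lemma \ref{lem:multipartitionsbasics}(ii). The guiding observation is that the function $g(w) := \log k(s, w) - \alpha w$, where $\alpha$ is the claimed asymptotic rate, is subadditive with $g(0) = 0$. Consequently, once one exhibits a single $w_0$ with $g(w_0) \leq 0$, writing $w = n w_0 + r$ with $0 \leq r < w_0$ and iterating yields $g(w) \leq \max_{0 \leq r < w_0} g(r)$, reducing the proof to a finite check of base values.

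For (i), this is immediate. Lemma \ref{lem:k3aw2} gives $k(2,0) = 1$, $k(2,1) = 2$ and $k(2,2) = 5$, and the inductive step $k(2, w+1) \leq k(2, w) \cdot k(2, 1) = 2\, k(2, w)$ started at $w = 2$ yields $k(2, w) \leq 5 \cdot 2^{w - 2} \leq 2^{w + 0.35}$, since $\tfrac{5}{4} \leq 2^{0.35}$. The case $w = 2$ is essentially tight and is what forces the constant $0.35$.

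For (ii) and (iii) the same strategy applies but requires more care, because the per-step estimate $k(s, w+1) \leq s \cdot k(s, w)$ only supplies the trivial rate $s$, rather than the desired $2^{a - 4/3}$ or $3^{a - 5/6}$, and naive two-summand splitting doubles the additive constant in the bound. My plan is therefore to expand $k(s, w_0)$ as a polynomial in $s$ by classifying $s$-splits of $w_0$ according to their multiset of nonzero parts (so that the summand contributed by a given split is a product of values $\pi(k_i)$ times a binomial factor in $s$), and to search for the smallest pivot $w_0$ at which $k(s, w_0) \leq s^{\alpha w_0}$ holds strictly. Since the leading term is $s^{w_0}/w_0!$, the factorial $w_0!$ eventually dominates the shrinkage factor $2^{(4/3)w_0}$ (respectively $3^{(5/6)w_0}$), so such a $w_0$ exists and can be taken of moderate size. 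The finitely many residues $0 \leq r < w_0$ are then verified using the same polynomial expressions, together with the assumption $a \geq 3$ in (ii) or $a \geq 2$ in (iii); for (iii), the dependence on $d$ (which divides $2 \cdot 3^{a-1}$) must be tracked explicitly through the $\log_3 d$ term.

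The main obstacle will be the boundary behaviour at small $a$, where the leading-term heuristic is loosest: for $a = 3, 4$ in (ii) and $a = 2$ in (iii), the pivot $w_0$ must be chosen larger to absorb the subleading polynomial-in-$s$ corrections to $k(s, w_0)$, and the additive constant in the bound is approached most closely. The two sharpenings -- replacing $+3$ by $+2$ when $a \geq 5$ in (ii), and dropping the $+2$ entirely when $a \geq 3$ and $w \geq 9$ in (iii) -- follow by repeating the case analysis with the tighter base-case estimates that become available in those regimes, and form the most delicate part of the bookkeeping.
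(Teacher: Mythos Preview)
Your treatment of (i) coincides with the paper's. For (ii) and (iii), however, the paper proceeds along a different and more uniform line: it uses the split identity
\[
k(cx,w)\;=\;\sum_{(i_1,\dots,i_x)\Vdash w} k(c,i_1)\cdots k(c,i_x)\;\leq\;\binom{x+w-1}{w}\,c^{w}
\]
(the inequality using $k(c,i)\leq c^{i}$ for $c\geq 3$). Taking $c=2^{a-2}$ and $x=4$ gives $k(2^a,w)\leq \binom{w+3}{w}\,2^{(a-2)w}$ \emph{uniformly} in $a\geq 4$, and a short induction on $w$ alone yields $\binom{w+3}{w}\leq 2^{(2/3)w+3}$, finishing the main case. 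The sharper constant for $a\geq 5$ comes from rerunning with $x=8$; part (iii) uses $x=3$. Only the boundary $a=3$ in (ii) needs special handling (there $c=2$, so one feeds in the bound from (i) for each factor $k(2,i_j)$ and checks $w\leq 9$ directly).

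Your subadditivity-plus-pivot scheme is sound, but it treats each $a$ separately (or needs the auxiliary polynomial-in-$s$ argument to unify large $a$), and the finite verification grows: the pivot $w_0$ with $k(s,w_0)\leq 2^{(a-4/3)w_0}$ drifts upward as $a$ decreases, and for the ``drop the $+2$'' sharpening in (iii) the induction base must actually be checked on the window $[9,\,9+w_0)$, not $[0,w_0)$. The paper's device buys a clean decoupling---the $a$-dependence sits entirely in the factor $c^{w}$ and the $w$-dependence in the binomial coefficient---so the induction becomes one-dimensional and the constants fall out without case-splitting in $a$.
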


\begin{proof}
Using Lemma \ref{lem:multipartitionsbasics}, the first claim follows $k(2,2) = 5 \leq 2^{2.35}$ by induction. Now consider the second inequality. For $c \geq 3$ and $x \geq 1$, it holds that 
\begin{equation}\label{eq:k(cx,w)}
k(cx, w) = \sum_{(i_1, \ldots, i_x) \vDash w} k(c, i_1) \cdots k(c,i_x)  \leq \sum_{(i_1, \ldots, i_x) \vDash w} c^w = \binom{x + w-1}{w} c^w
\end{equation} 
(cf. \cite[Lemma 5.6]{MAL18}). We apply this estimate with $x = 4$. To this end, we claim that for all $w \geq 0$, it holds that 
$$\binom{w+3}{w} \leq 2^{\frac{2}{3}w + 3}.$$
For $w \leq 5$, this can be checked directly. For $w \geq 5$, we obtain by induction
$$\binom{(w+1)+3}{w+1} = \frac{w+4}{w+1} \cdot  \binom{w+3}{w} \leq 2^{\frac{2}{3}(w+1) + 3}$$
since $\frac{w+4}{w+1} = 1 + \frac{3}{w + 1} \leq 2^{2/3}$ for $w \geq 5$. Equation \eqref{eq:k(cx,w)} then yields for $a \geq 4$
$$k(2^a, w) \leq \binom{w+3}{w} \cdot 2^{(a-2)w} \leq 2^{(a-2)w + \frac{2}{3}w + 3} = 2^{(a-\frac{4}{3}) w + 3}.$$
For $a = 3$, we check the claim directly for $w \leq 9$ using GAP \cite{GAP4}. For $w \geq 10$, we can use the above proof to show that even $\binom{w+3}{w} \leq 2^{2/3 w + 1.6}$, so with the first part of the lemma we obtain
$$k(8,w) = \sum_{(i_1,\ldots, i_4) \vDash w} k(2, i_1) \cdots k(2, i_4) \leq \sum_{(i_1,\ldots, i_4) \vDash w} 2^{i_1 + 0.35 + \ldots + i_4 + 0.35} = \binom{w+3}{w} \cdot 2^{w+ 1.4} \leq 2^{\frac{5}{3}w + 3}.$$

For the stronger bound for $k(2^a,w)$, we use $x = 8$ instead of $x = 4$. The last part of the lemma can be proven in the same fashion by using $x = 3$ and $b = 3^a$ for $d = 1$ and $b = (3^a + 3)/2 \leq 2 \cdot 3^{a-1}$ for $ d = 2$. 
\end{proof}

\begin{lemma}\label{lem:bound3,4}
For $w \geq 1$, it holds that $k(3,w) \leq 3^{\frac{w}{2} + \frac{9}{4}}$ and $k(4,w) \leq 2^{1.2 w + 2} $. 
\end{lemma}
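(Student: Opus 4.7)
My plan is to prove both inequalities in the following two-step framework, inspired by the proof of $k(2,w)\leq 2^{w+0.35}$ in Lemma~\ref{lem:multipartitions}(i).

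First, using the formula $k(s,w)=\sum_{(k_1,\ldots,k_s)\Vdash w}\pi(k_1)\cdots\pi(k_s)$ from Lemma~\ref{lem:k3aw2}, I would compute $k(s,w)$ explicitly (with GAP if needed) for all $w$ up to some threshold $w_0$. The threshold $w_0$ is chosen so that (a) the target inequality $k(s,w)\leq C\alpha^w$ (with $\alpha=\sqrt3,\ C=3^{9/4}$ for $s=3$ and $\alpha=2^{1.2},\ C=2^2$ for $s=4$) is verified directly for $1\leq w\leq w_0$, and (b) the stronger bound $k(s,w_0)\leq\alpha^{w_0}$ already holds at the endpoint. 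Such a $w_0$ must exist, since the generating function $\sum_w k(s,w)x^w=\prod_k (1-x^k)^{-s}$ converges at $x=1/\alpha$, which forces $k(s,w)/\alpha^w\to 0$.

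Second, for $w>w_0$, I would iterate the submultiplicativity $k(s,w_1+w_2)\leq k(s,w_1)k(s,w_2)$ from Lemma~\ref{lem:multipartitionsbasics}(ii). Writing $w=qw_0+r$ with $1\leq r\leq w_0$ and $q\geq 1$, iteration gives
\[
k(s,w)\leq k(s,w_0)^q\cdot k(s,r)\leq \alpha^{qw_0}\cdot C\alpha^r=C\alpha^w,
\]
using the bound $k(s,w_0)\leq\alpha^{w_0}$ for the first factor and the verified base case $k(s,r)\leq C\alpha^r$ for the second. This is precisely the desired bound.

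The main obstacle is locating a tractable $w_0$, particularly for $s=3$: the target constant $3^{9/4}\approx 11.84$ sits only marginally above the maximum of $k(3,w)/3^{w/2}$, and the tail of this ratio decreases only slowly (consistent with the Hardy--Ramanujan growth $k(3,w)\sim Cw^{-3/2}e^{\pi\sqrt{2w}}$), so $w_0$ may need to be sizable; the computation is nevertheless routine via GAP. For $k(4,w)$ the setting is more comfortable, and an alternative route avoids the issue entirely: combining the convolution $k(4,w)=\sum_j k(2,j)k(2,w-j)$ with Lemma~\ref{lem:multipartitions}(i) gives $k(4,w)\leq (w+1)\cdot 2^{w+0.7}$, after which $(w+1)\leq 2^{0.2w+1.3}$ holds for all $w$ past a small explicit threshold, with the remaining small cases checked directly.
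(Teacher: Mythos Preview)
Your proposal is correct. For $k(4,w)$, your ``alternative route'' is exactly the paper's argument: write $k(4,w)=\sum_{j}k(2,j)\,k(2,w-j)\leq(w+1)\cdot 2^{w+0.7}$ via Lemma~\ref{lem:multipartitions}(i), then bound $w+1\leq 2^{0.2w+1.3}$ for $w\geq 13$ and check the remaining cases in GAP.

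For $k(3,w)$ your primary approach genuinely differs from the paper's. The paper first establishes a pointwise partition bound $\pi(n)\leq 1.4^{n+1.2}$ for all $n\geq 1$ (using $\pi(n)\leq e^{c\sqrt n}/n^{3/4}$ with $c=\pi\sqrt{2/3}$ for large $n$, plus a direct check for small $n$), feeds this into the composition sum to obtain $k(3,w)\leq\binom{w+2}{2}\cdot 1.4^{w+3.6}$, and shows this is at most $3^{w/2+9/4}$ for $w\geq 20$, leaving only $w\leq 19$ to GAP. Your submultiplicativity argument bypasses bounding $\pi(n)$ altogether, but the threshold $w_0$ at which $k(3,w_0)\leq 3^{w_0/2}$ first holds is larger than $20$ (the ratio $k(3,w)/3^{w/2}$ peaks near the constant $3^{9/4}$ in the teens and falls below $1$ only some way further out), so your direct verification must cover more cases. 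Both routes are valid; the paper trades an auxiliary analytic estimate for a shorter GAP check, whereas you trade a longer GAP check for a more self-contained argument that needs nothing beyond Lemma~\ref{lem:multipartitionsbasics}(ii) and computation.
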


\begin{proof}
Using $\pi(n) \leq \frac{e^{c \sqrt{n}}}{n^{3/4}}$ with $c = \pi \sqrt{2/3}$ (cf. \cite[p.114]{AZE09}), we obtain
$\pi(n) \leq 1.4^{n + 1.2}$ for $n \geq 38$. We can check directly that this bound in fact holds for all $n \geq 1$. With this, we have 
	$$k(3,w) = \sum_{(i_1, i_2, i_3) \vDash w} \pi(i_1) \pi(i_2) \pi(i_3) \leq \binom{w+2}{w} \cdot 1.4^{w + 3.6}.$$
	The last term can be bounded by $3^{\frac{w}{2} + \frac{9}{4}}$ for $w \geq 20$. The remaining cases are checked in GAP.
%
%
\\
By Lemma \ref{lem:k2tschwach}, we have
$$k(4,w) = \sum_{i_1 + i_2 = w} k(2,i_1) k(2,i_2) \leq \sum_{i_1 + i_2 = w} 2^{i_1 + 0.35} \cdot 2^{i_2 + 0.35} = (w+1) \cdot 2^{w+0.7}.$$
With $w+1 \leq 2^{0.2w +1.3}$ for $w \geq 13$, we obtain the desired bound. The remaining cases can be checked in GAP.  
\end{proof}

\begin{lemma}\label{lem:plw}
	\begin{enumerate}
		\item It holds that $p_3(w) \leq 3^{w/6}$ for all $w \neq 3$. 
		\item For $w \geq 0$, it holds that $p_2(w) \leq 2^{\frac{w}{3}+1}.$
	\end{enumerate}
\end{lemma}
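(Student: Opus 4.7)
I would prove both parts by induction on $w$, using a recurrence for $p_\ell$ obtained by splitting $\ell$-decompositions according to the value of the first entry $t_0$. In any $\ell$-decomposition $(t_0,\ldots,t_k)$ of $w$ we have $t_0 \equiv w \pmod{\ell}$. When $\ell \nmid w$, so $t_0 \geq 1$, the map $(t_0,t_1,\ldots) \mapsto (t_0-1,t_1,\ldots)$ gives a bijection between $\ell$-decompositions of $w$ and of $w-1$, hence $p_\ell(w) = p_\ell(w-1)$. When $\ell \mid w$ and $w > 0$, split the decompositions according to whether $t_0 = 0$ (after reindexing $(0,t_1,t_2,\ldots) \mapsto (t_1,t_2,\ldots)$, these correspond bijectively to $\ell$-decompositions of $w/\ell$) or $t_0 \geq \ell$ (subtract $\ell$ from $t_0$ to get a bijection with $\ell$-decompositions of $w-\ell$). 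This yields
\[
p_\ell(w) \;=\; p_\ell(w/\ell) \,+\, p_\ell(w-\ell) \quad \text{for } \ell \mid w,\ w > 0.
\]

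Given this recurrence, both inequalities follow by a routine induction. For part (ii) with $w = 2m$ even, the inductive step reduces to the elementary inequality $2^{-2/3} + 2^{-m/3} \leq 1$, which holds as soon as $m$ is moderately large; the finitely many small values of $w$ are checked directly from the recurrence. For odd $w$, the bound $p_2(w) = p_2(w-1) \leq 2^{(w-1)/3 + 1} \leq 2^{w/3+1}$ is immediate. The case (i) is analogous: for $w \not\equiv 0 \pmod 3$ the bound passes trivially to $p_3(w) = p_3(w-1)$, while for $w = 3m$ the step reduces to $3^{-1/2} + 3^{-m/3} \leq 1$, which holds for $m \geq 3$, with the finitely many remaining $w$ verified by hand.

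The exclusion $w \neq 3$ in part (i) is genuinely necessary, since $p_3(3) = 2 > 3^{1/2}$. The only care required is therefore to start the induction high enough (say $w \geq 10$) so that the forbidden value $3$ never appears in an inductive hypothesis, i.e. neither $w-3$ nor $w/3$ equals $3$. Beyond this bit of bookkeeping and checking finitely many base cases, I do not anticipate any substantive obstacle.
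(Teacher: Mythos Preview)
Your proposal is correct and the recurrence $p_\ell(w)=p_\ell(w/\ell)+p_\ell(w-\ell)$ for $\ell\mid w$, together with $p_\ell(w)=p_\ell(w-1)$ for $\ell\nmid w$, is exactly right; the induction then goes through with the small base-case checks and the care you already note around $w=3$ in part~(i).

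The paper proceeds a little differently: instead of your exact additive recurrence it quotes the multiplicative bound $p_\ell(w)\le \tfrac{w}{\ell}\,p_\ell(\lfloor w/\ell\rfloor)$ from \cite[Lemma~5.2]{MAL17}, and then for $\ell=3$ combines the inductive hypothesis $p_3(\lfloor w/3\rfloor)\le 3^{w/18}$ with the elementary estimate $w/3\le 3^{w/9}$ (valid for $w\ge 8$) to obtain $p_3(w)\le 3^{w/6}$; the cases $w\le 12$ are checked directly. Part~(ii) is handled the same way. Your argument is more self-contained, since you derive the recursion from first principles rather than citing an external inequality, and the exact recurrence makes the inductive step a clean linear inequality $3^{-1/2}+3^{-m/3}\le 1$ (resp.\ $2^{-2/3}+2^{-m/3}\le 1$). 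The paper's version has the mild advantage that only the single value $\lfloor w/\ell\rfloor$ is needed in the inductive step, so one need not worry about $w-\ell$ hitting the excluded value~$3$. Either way the proof is short and routine.
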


\begin{proof}
	By \cite[Lemma 5.2]{MAL17}, it holds that $p_\ell(w) \leq \frac{w}{\ell} \cdot p_\ell(\lfloor w/\ell \rfloor)$ for $\ell \geq 2$ and $w \geq 1$. Checking the claim directly for $w \leq 12$, we can use induction and $w/3 \leq 3^{w/9}$ for $w \geq 8$ to show by induction that 
	$p_\ell(w) \leq \frac{w}{\ell} \cdot p_\ell(\lfloor w/\ell \rfloor) \leq 3^{w/9} \cdot 3^{w/18} = 3^{w/6}.$
	The second statement can be proved in the same fashion.
\end{proof}

\begin{lemma}\label{lem:bound1.5} \label{lem:bound23}
\begin{enumerate}
\item For $\ell = 2$ and $a \geq 4$, it holds that $k(B) \leq 2^{(a-1) w + 3/2}.$ For $a \geq 3$, there is the weaker bound $k(B) \leq 2^{(a-1)w +3}.$ 
\item For $a \geq 2$ and any $3$-decomposition $(w_0, \ldots, w_v)$ of $w$, we have 
$$k(b,w_0) \cdot \prod_{i \geq 1} k(b_1, w_i)	\leq 3^{\left(a-\frac{5}{6}\right)w + 2 - \log_3(d)},$$
which yields $k(B) \leq p_3(w) \cdot 3^{\left(a-\frac{5}{6}\right)w + 2 - \log_3(d)}.$
\end{enumerate}
\end{lemma}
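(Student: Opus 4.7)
The plan is to prove both parts by establishing per-term bounds on the product appearing in the $k(B)$-formula from Lemma \ref{lem:kBformula} and then summing over $W_w$. For part (ii), I would show that for every $3$-decomposition $\textbf{w}$ and every $i \geq 1$ with $w_i \geq 1$,
\[
\log_3 k(b_1, w_i) \leq \left(a - \frac{5}{6}\right) 3^i w_i.
\]
Combined with the bound $k(b, w_0) \leq 3^{(a - 5/6) w_0 + 2 - \log_3(d)}$ from Lemma \ref{lem:multipartitions}(iii) and the identity $w_0 + \sum_{i \geq 1} 3^i w_i = w$, this telescopes to the asserted per-term estimate, and summing over $\textbf{w} \in W_w$ then yields the $k(B)$-bound with the factor $p_3(w)$.

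To verify the auxiliary inequality on $k(b_1, w_i)$, I would split into the cases $d = 1$ and $d = 2$. For $d = 1$ one has $b_1 = 2 \cdot 3^{a-1}$, and equation \eqref{eq:k(cx,w)} with $x = 2$, $c = 3^{a-1}$ gives $k(b_1, w_i) \leq (w_i + 1) \cdot 3^{(a-1) w_i}$; for $d = 2$, $b_1 = 3^{a-1}$ and Lemma \ref{lem:multipartitionsbasics}(i) directly gives $k(b_1, w_i) \leq 3^{(a-1) w_i}$. Since $3^i \geq 3$ for $i \geq 1$, the target exponent $(a - 5/6) 3^i w_i$ beats $(a-1) w_i + \log_3(w_i+1)$ by at least $(2a - 3/2) w_i - \log_3(w_i + 1)$, which is nonnegative for all $a \geq 2$ and $w_i \geq 1$. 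I expect this to be a short calculation.

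For part (i) the same idea applies to $k(B) = \sum_\textbf{w} k(2^a, w_0) \prod_{i \geq 1} k(2^{a-1}, w_i)$: bound $k(2^a, w_0)$ via Lemma \ref{lem:multipartitions}(ii) (using the sharper $+2$ version when $a \geq 5$) and $k(2^{a-1}, w_i)$ for $i \geq 1$ either by the trivial estimate $2^{(a-1) w_i}$ or, when $w_i$ is large, again by Lemma \ref{lem:multipartitions}(ii). As in part (ii), higher-index terms are cheap because $\sum 2^i w_i = w$: each such term contributes $(a-1) w_i$ to the exponent while costing $(a-1) 2^i w_i$ against the target, producing slack of order $(a-1)(2^i - 1) w_i$ that should absorb the factor $p_2(w) \leq 2^{w/3 + 1}$ from Lemma \ref{lem:plw}.

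The main obstacle I anticipate lies in part (i) with the degenerate decomposition $\textbf{w} = (w)$: no slack from higher-index positions is then available, so one must rely on Lemma \ref{lem:multipartitions}(ii) directly, and the resulting inequality only holds above a small threshold in $w$. The remaining small cases, together with any additional boundary cases arising from the tighter constant $3/2$ (for $a \geq 4$) versus $3$ (for $a \geq 3$), I would verify by direct computation in the spirit of Lemma \ref{lem:bound3,4}.
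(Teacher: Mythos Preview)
Your argument for part~(ii) is correct and essentially the paper's: both bound $k(b,w_0)$ via Lemma~\ref{lem:multipartitions}(iii) and the factors $k(b_1,w_i)$ trivially, then use $\sum_{i\ge 1}3^iw_i=w-w_0$ to telescope. The paper simply bounds $\prod_{i\ge 1}k(b_1,w_i)\le 3^{a(w-w_0)/3}$ and maximises the resulting exponent over $w_0\le w$, while you prove the sharper per-factor estimate $k(b_1,w_i)\le 3^{(a-5/6)3^iw_i}$; both reach the same conclusion.

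For part~(i), however, your plan does not yield the stated constants. Writing each summand as $2^{(a-4/3)w_0+C}\cdot 2^{(a-1)\sum_{i\ge1}w_i}$ (with $C\in\{2,3\}$ from Lemma~\ref{lem:multipartitions}(ii)) and dividing by $2^{(a-1)w}$ gives a ratio $2^{C-w_0/3-(a-1)\sum_{i\ge1}(2^i-1)w_i}$, which is \emph{maximised} at $\mathbf w=(w)$ with value $2^{C-w/3}$. Multiplying this uniform bound by $p_2(w)\le 2^{w/3+1}$ therefore yields only $k(B)\le 2^{(a-1)w+C+1}$, i.e.\ exponent $+3$ for $a\ge 5$ and $+4$ for $a\in\{3,4\}$ --- one unit worse than claimed in each case, independently of $w$. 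This is not a small-$w$ boundary effect that direct computation can remove; the loss comes from the $p_2(w)$ factor itself, which overcounts near $w_0=w$.

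The missing idea, which the paper supplies, is to strip off the (at most three) decompositions with $w_0\in\{w,w-2\}$ and bound them individually, applying the $p_2(w)$ count only to the remaining decompositions with $w_0\le w-4$, where the extra slack $(a-1)\sum(2^i-1)w_i\ge 2(a-1)$ more than compensates. With this refinement (and a few explicit checks for small $w$ and $a\in\{3,4\}$) the constants $3/2$ and $3$ follow.
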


\begin{proof}
We first prove the second part. By Lemma  and \ref{lem:multipartitionsbasics} and \ref{lem:multipartitions}, it holds that
\begin{alignat*}{2}
k(b,w_0) \cdot \prod_{i \geq 1} k(b_1, w_i) &\leq 3^{\left(a-\frac{5}{6} \right)w_0 + 2 - \log_3(d)} \cdot \prod_{i \geq 1} 3^{a w_i} 
&&\leq
3^{\left(a-\frac{5}{6} \right)w_0 +2 - \log_3(d)} \cdot 3^{a \frac{w-w_0}{3}} \\
&= 3^{a \frac{w}{3} + \left(\frac{2}{3}a-\frac{5}{6}\right)w_0 + 2 - \log_3(d)} 
&&\leq 3^{\left(a-\frac{5}{6} \right) w + 2 - \log_3(d)}.
\end{alignat*}	
	
Now consider the first bound for $\ell = 2$. Let $w \geq 2$ and $a \geq 5$. We use the stronger bound from Lemma \ref{lem:multipartitions}. There is a single binary decomposition of $w$ with $w_0 = w$ and at most two with $w_0 = w-2$. For all others, it holds that $w_0 \leq w-4$, since $w_0$ and $w$ must have the same parity. Analogously to the above, we have 
	\begin{alignat*}{1}
	k(B) &= \sum_{ \textbf{w} \in W_w} k(2^a, w_0) \prod_{i \geq 1} k(2^{a-1},w_i) \leq \sum_{\textbf{w} \in W_w} 2^{\left(a-\frac{4}{3}\right) w_0 +2+ (a-1)\frac{w-w_0}{2}} \\
	&\leq p_2(w) \cdot 2^{\left(\frac{a}{2}-\frac{5}{6}\right)(w-4) +2+ \frac{a-1}{2}w}  + 2 \cdot 2^{\left(\frac{a}{2}-\frac{5}{6}\right) (w-2) +2+ \frac{a-1}{2}w} + 2^{\left(\frac{a}{2}-\frac{5}{6}\right)w +2+ \frac{a-1}{2}w} \\
	&\leq 2^{(a-1)w + \frac{3}{2}} \cdot \left(2^{\frac{29}{6} - 2a}+ 2^{- \frac{w}{3} + \frac{19}{6} - a} + 2^{-\frac{w}{3} +\frac{1}{2}} \right) \leq 2^{(a-1)w + \frac{3}{2}}.
	\end{alignat*} 
	Here, we inserted the estimate $p_2(w) \leq 2^{\frac{w}{3}+1}$ (cf.\ Lemma \ref{lem:plw}) in the second step. In the third one, we used that the term in brackets is smaller than one for $a \geq 6$ and $w \geq 2$ as well as for $a =5$ and $w \geq 3$. The finitely many remaining cases can be checked directly. For $a \in \{3,4\}$, we use the same approach, albeit with the weaker bound of Lemma \ref{lem:multipartitions}, to prove the claim.  For $w = 1$, we have $k(B) = k(2^a, 1) = 2^a,$ so the inequality holds.
\end{proof}

We now treat the case of small values of $a$. 

\begin{lemma}\label{lem:bound1.65}
	Let $w \geq 1$. For $\ell =2$ and $a = 2$, we have $k^w(B) \leq 2^{1.4 w + 1.65}$ and for $\ell = 3$ and $ a = 1$, it holds that
	$k^w(B) \leq 3^{\frac{w +7}{2}}.$
\end{lemma}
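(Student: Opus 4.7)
My plan follows the template of Lemma \ref{lem:bound23}: apply the explicit formula of Lemma \ref{lem:kBformula}, split the outer sum over $\mathbf{w} \in W_w$ according to the value of $w_0$, bound each piece via Lemmas \ref{lem:multipartitions}, \ref{lem:bound3,4} and \ref{lem:plw}, and collapse the tail into a convergent geometric series; the finitely many small values of $w$ that fall outside the regime of the asymptotic estimate are to be checked by direct computation.

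For $\ell = 3$ and $a = 1$ the parameters become $b = d + (3-1)/d = 3$ for both $d \in \{1, 2\}$ and $b_1 = 2/d \in \{1, 2\}$, so $k(b_1, w_i) \leq k(2, w_i) \leq 2^{w_i + 0.35}$ by Lemma \ref{lem:k2tschwach}. Setting $u = (w - w_0)/3$ (only $w_0 \equiv w \mod 3$ contribute) and re-indexing the tail as $v_j := w_{j+1}$, Lemma \ref{lem:kBformula} rewrites as
\begin{equation*}
k^w(B) = \sum_{u = 0}^{\lfloor w/3 \rfloor} k(3, w - 3u) \cdot S(u), \qquad S(u) := \sum_{\mathbf{v} \in W_u} \prod_{j \geq 0} k(b_1, v_j),
\end{equation*}
with $S(0) := 1$. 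A short optimisation (replacing $3^j \geq 3$ units at level $0$ by a single unit at level $j \geq 1$ only decreases $\sum v_j + 0.35 \, |\{j : v_j > 0\}|$) shows that $S(u) \leq p_3(u) \cdot 2^{u + 0.35}$ for $u \geq 1$. Inserting $k(3, w_0) \leq 3^{w_0/2 + 9/4}$ from Lemma \ref{lem:bound3,4} and $p_3(u) \leq 3^{u/6}$ from Lemma \ref{lem:plw} (with the exceptional $u = 3$ handled via the explicit value $p_3(3) = 2$), the $u$-sum becomes a convergent geometric series of ratio $r = 2 \cdot 3^{-4/3} < 1/2$; a numerical check bounds the total multiplicative contribution by $3^{5/4}$, which yields $k^w(B) \leq 3^{w/2 + 9/4 + 5/4} = 3^{(w+7)/2}$.

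For $\ell = 2$ and $a = 2$ (so $\tilde{a} = 1$), Lemma \ref{lem:kBformula} collapses to $k^w(B) = \sum_{\mathbf{w} \in W_w} k(4, w_0) \prod_{i \geq 1} k(2, w_i)$, and the same strategy applies using $k(4, w_0) \leq 2^{1.2 w_0 + 2}$ (Lemma \ref{lem:bound3,4}), $k(2, w_i) \leq 2^{w_i + 0.35}$, and $p_2(u) \leq 2^{u/3 + 1}$ (Lemma \ref{lem:plw}), now with $u = (w - w_0)/2$. The analogous reduction produces a geometric series of ratio $2^{-16/15}$ and a bound of the form $k^w(B) \leq 2^{1.2 w + C'}$ for an explicit $C'$. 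Since $1.2 w + C' \leq 1.4 w + 1.65$ holds for all $w$ above an explicit threshold, the claim follows once the finitely many remaining small values of $w$ are checked directly.

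The main obstacle is not the asymptotic inequality, which the geometric-series argument handles cleanly, but the boundary bookkeeping: the estimate $p_3(u) \leq 3^{u/6}$ fails at $u = 3$ and is loose for small $u$, so the low-$w$ regime must be covered by explicit computation in GAP, exactly as in the proofs of Lemmas \ref{lem:bound3,4} and \ref{lem:bound23}.
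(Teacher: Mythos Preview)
Your argument is correct, but it takes a genuinely different route from the paper's. You follow the pattern of Lemma~\ref{lem:bound23}: split the $W_w$-sum according to $w_0$, bound the tail $S(u)=\sum_{\mathbf v\in W_u}\prod_j k(b_1,v_j)$ uniformly by $p_\ell(u)\cdot 2^{u+0.35}$ (your maximisation over $\mathbf v$ is justified, since $0.35(N-1)\le \sum_{j\ge 1} v_j(3^j-1)$ shows the exponent is maximised at $v_0=u$), and then collapse the resulting sum into a geometric series of ratio $2\cdot 3^{-4/3}$ resp.\ $2^{-16/15}$. The paper instead proves the lemma by \emph{induction on $w$}: it observes that truncating the first coordinate of a decomposition gives a bijection $W_w\leftrightarrow\bigcup_{j} W_{r(j)}$, replaces $k(\ell^{a-1},w_1)$ by the larger $k(\ell^a,w_1)$ so that the inner sum becomes exactly $k^{r(j)}(B)$, and then feeds in the induction hypothesis $k^{r(j)}(B)\le \ell^{\alpha r(j)+\beta}$ together with the bound on $k(\ell^a,\cdot)$ from Lemma~\ref{lem:bound3,4}.

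What each buys: your direct estimate is cleaner and avoids the induction machinery, at the cost of invoking Lemma~\ref{lem:plw} for $p_\ell(u)$ and carrying the $u=3$ exception by hand. The paper's approach does not need $p_\ell$ at all, and more importantly it isolates the recursive identity $k^w(B)\le\sum_j k(\ell^a,\ell j+a_0)\,k^{r(j)}(B)$ and the underlying bijection $W_w\cong\bigcup_j W_{r(j)}$; both of these are reused verbatim later in the paper (Lemma~\ref{lem:recb} and Remark~\ref{rem:weven}) for the semidihedral case $\varepsilon q\equiv 3\bmod 4$. So while your proof of the present lemma is self-contained and arguably simpler, the paper's inductive formulation is doing double duty as infrastructure for Section~2.1. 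Both arguments leave a finite range of small $w$ to be checked directly, of comparable size.
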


\begin{proof}
	We prove the claim by induction on $w$. Note that for any $\ell$-decomposition $(w_0, w_1, \ldots, w_v)$ of $w$, $\tilde{w} = (w_1, \ldots, w_v)$ is an $\ell$-decomposition of $(w-w_0)/\ell$ and each of them arises in this way. So there is a bijection between $W_w$ and $\bigcup_{j = 0}^{(w-a_0)/\ell} W_{r(j)}$, where $r(j) = (w-(a_0 + \ell j))/\ell$ (note that $w_0$ and $w$ have the same remainder modulo $\ell$). Summing over all possible values of $w_0$, we therefore obtain (setting $k^0(B) := 1$)
	\begin{alignat*}{1}
	k^w(B)&=  \sum_{\textbf{w} \in W_w} k(\ell^a, w_0) \cdot \prod_{i \geq 1} k(\ell^{a-1}, w_i) = \sum_{j = 0}^{\frac{w-a_0}{\ell}} \sum_{\tilde{w} \in W_{r(j)}} k(\ell^a, \ell j+a_0) \cdot \prod_{i \geq 1} k(\ell^{a-1}, w_i) \\
	&\leq k(\ell^a, w)+\sum_{j = 0}^{\frac{w-a_0}{\ell}-1} k(\ell^a, \ell j +a_0)  \sum_{\tilde{w} \in W_{r(j)}} k(\ell^a, w_1) \cdot \prod_{i \geq 2} k(\ell^{a-1}, w_i) \\
	&= \sum_{j = 0}^{(w-a_0)/\ell} k(\ell^a, \ell j +a_0) \cdot k^{r(j)}(B).
	\end{alignat*}
	
    By induction, using the geometric series as well as the bound from Lemma \ref{lem:bound3,4}, we obtain for $\ell = 2$
	\begin{alignat*}{1}
	k(B) &\leq \sum_{j = 0}^{(w-a_0)/2} 2^{1.2 (2j + a_0) +2} \cdot 2^{1.4 \frac{w- (2j + a_0)}{2} + 1.65} \leq 2^{3.65+ 0.7 w + 0.5 a_0} \cdot \left(2^{\frac{w-a_0}{2}+1} -1\right) \\
	&\leq 2^{1.2 w + 4.65} \leq 2^{1.4 w + 1.65},
	\end{alignat*}
	where the last inequality holds for $w \geq 15$. For $\ell = 3$, assume that $k^i(B) \leq 3^i$ for $i \leq \frac{w-a_0}{3}$. With the above and the bound from Lemma \ref{lem:bound3,4}, we have 
	
	$$k(B) \leq \sum_{j = 0}^{(w-a_0)/3} 3^{\frac{3j + a_0}{2} + \frac{9}{4}} \cdot 3^{\frac{w-(3j  +a_0)}{3}} \leq 3^{\frac{9}{4} + \frac{a_0}{6} + \frac{w}{3}} \cdot \frac{3^{\frac{1}{2}\left(\frac{w-a_0}{3}+1\right)} -1}{\sqrt{3}-1} \leq 3^{\frac{w+7}{2}} \leq 3^w,$$

	
	where the last inequality holds for $w \geq 7$. Checking directly that $k^w(B) \leq 3^w \leq 3^{\frac{w+7}{2}}$ for $w \leq 6$, this shows inductively that $k^w(B) \leq \min\{3^\frac{w+7}{2}, 3^w\}$. The remaining cases can be checked directly.  
\end{proof}

We have now assembled the prerequisites to prove the inequalities (C1) and (C2) for the general linear and unitary groups. To this end, note that by the same argument as in the proof of \cite[Prop. 5.11]{MAL18}, we may assume in the following that $w$ is divisible by $\ell$. For the number of characters in $D'$ and $D$, it holds by \cite[Lemma 5.10]{MAL18}
\begin{equation}\label{eq:kD'3}
k(D') = \prod_{i \geq 1} k(D_{i,\ell^a}')^{a_i} \geq \prod_{i \geq 1} \ell^{a(\ell^i-1) - \frac{\ell^i-\ell}{\ell-1} - i +1} = \ell^{\left(a- \frac{1}{\ell - 1}\right) w - \sum a_i \left(a+ i - \frac{2 \ell - 1}{\ell- 1}\right)},
\end{equation} and 
\begin{equation}\label{eq:kD3}
k(D) = \prod_{i \geq 1} k(D_{i,\ell^a})^{a_i} \geq \prod_{i \geq 1} \left(\frac{\ell^{a \ell^i}}{\ell^{(\ell^i-1)/(\ell-1)}}\right)^{a_i} = \ell^{\left(a-\frac{1}{\ell-1}\right) w + \frac{1}{\ell - 1} \sum_{i \geq 1} a_i}.
\end{equation}

For small values of $a$, we need to improve this bound: 
\begin{lemma}\label{lem:nrcharacters}
For $i \geq 1$, it holds that $$k(D_{i,3}) \geq 3^{3^\frac{i-1}{2}} \cdot 3^{\frac{3^i +1}{2}}$$ 
	and 
		$$k(D_{i,3}') \geq 3^{3^\frac{i-1}{2}} \cdot 3^{\frac{3^i +1}{2}-i}.$$	
\end{lemma}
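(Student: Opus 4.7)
The plan is to prove both inequalities by induction on $i \geq 1$, using the recursive structure $D_{i,3} = D_{i-1,3} \wr C_3$ together with a direct count of conjugacy classes in a wreath product with a cyclic group of prime order.

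For the first bound, I will apply the standard class enumeration for the wreath product $G \wr C_3$: a Burnside count for the cyclic action of $C_3$ on the base $G^3$ produces the $(k(G)^3 + 2\,k(G))/3$ classes with trivial $C_3$-coordinate, while the elements with nontrivial top coordinate split into $2\,k(G)$ further classes, parametrized by their product in $G$. This yields
$$k(G \wr C_3) = \frac{k(G)^3 + 8\,k(G)}{3} \geq \frac{k(G)^3}{3}.$$
Writing $e_i := 3^{(i-1)/2} + (3^i+1)/2$ for the exponent appearing in the claim, a direct calculation gives the key relation $3\,e_{i-1} - 1 \geq e_i$, so the inductive step $k(D_{i,3}) \geq k(D_{i-1,3})^3/3 \geq 3^{\,3 e_{i-1} - 1} \geq 3^{e_i}$ propagates the bound. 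The base case reduces to computing $k(D_{1,3}) = k(C_3 \wr C_3) = 17$ explicitly, e.g.\ via Clifford theory on the abelian base $C_3^3$, and verifying the claimed inequality directly.

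For the second bound, I will use the well-known description $(G \wr C_3)' = \{(g_1, g_2, g_3) \in G^3 : g_1 g_2 g_3 \in G'\}$ of the derived subgroup of a wreath product with $C_3$, which fits $D_{i,3}'$ into the short exact sequence
$$1 \to (D_{i-1,3}')^3 \to D_{i,3}' \to (D_{i-1,3}^{ab})^2 \to 0$$
with abelian quotient of order $3^{2i}$. I then bound $k(D_{i,3}')$ from below by partitioning elements according to their image in this quotient and applying the inductive hypothesis to the fiber $(D_{i-1,3}')^3$. The factor $3^{-i}$ appearing in the claimed exponent (compared to the first bound) reflects exactly the index $3^{i+1}$ of $D_{i,3}'$ in $D_{i,3}$, so the exponent adjustment falls out naturally from the coset structure. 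The base $i = 1$ is immediate: $D_{1,3}'$ is the augmentation kernel of $C_3^3 \to C_3$, hence isomorphic to $C_3^2$, giving $k(D_{1,3}') = 9$.

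The main obstacle is the second bound. Unlike $D_{i,3}$, whose wreath structure yields a clean cubing recursion, $D_{i,3}'$ is only a proper normal subgroup of $D_{i,3}$, so its conjugacy class count does not reduce directly to a cubing-type recursion and must instead be extracted from the extension above. The delicate step is keeping track of how conjugation by $D_{i-1,3}^3 / D_{i,3}'$, a group of order $3^i$, acts on the cosets of $(D_{i-1,3}')^3$ inside $D_{i,3}'$; this is what ensures the correct dependence on $i$ in the final exponent and balances the cubing of $k(D_{i-1,3}')$ against the $3^{2i}$ contribution of the abelian quotient.
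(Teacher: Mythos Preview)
Your treatment of the first inequality is exactly the paper's: the recursion $k(G\wr C_3)\geq k(G)^3/3$ together with the sharpened base value $k(D_{1,3})=17$ is precisely what ``analogously to \cite[Lemma~5.10]{MAL18}'' means here. One caveat: with the exponent $e_i=3^{(i-1)/2}+(3^i+1)/2$ as printed, the base case reads $17\geq 3^{e_1}=27$, which is false. The applications \eqref{eq:kD'31}--\eqref{eq:kD31} are consistent with the exponent $\tfrac{3^{i-1}}{2}$ rather than $3^{(i-1)/2}$; with that reading one has $3e_{i-1}-1=e_i$ on the nose and $17>3^{5/2}$, so your induction goes through.

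The genuine gap is in the second inequality. Your plan inducts on $k(D_{i-1,3}')$ through the extension
\[
1\longrightarrow (D_{i-1,3}')^{3}\longrightarrow D_{i,3}'\longrightarrow (D_{i-1,3}^{\mathrm{ab}})^{2}\longrightarrow 1,
\]
but the quotient has order $3^{2i}$, and any lower bound on $k(D_{i,3}')$ obtained from $k\bigl((D_{i-1,3}')^{3}\bigr)=k(D_{i-1,3}')^{3}$ through this extension costs a factor $3^{2i}$. Writing $f_i$ for the target exponent one finds $3f_{i-1}-2i-f_i=4-4i<0$ for all $i\geq 2$, so the step does not close. Your remark about the action of $D_{i-1,3}^{3}/D_{i,3}'$ (a group of order $3^{i}$) does not repair this, and the side observation that $[D_{i,3}:D_{i,3}']=3^{i+1}$ only yields $k(D_{i,3}')\geq k(D_{i,3})/3^{i+1}$, which is off by one in the exponent.

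The paper's route (via \cite[Lemma~5.10]{MAL18}; compare the argument for $k(P_{2^i}')$ in Lemma~\ref{lem:Sylowb}) avoids any induction on the derived subgroups. One uses instead that $D_{i,3}'$ is normal of index $|D_{i-1,3}^{\mathrm{ab}}|=3^{i}$ in $D_{i-1,3}^{3}$, whence
\[
k(D_{i,3}')\ \geq\ \frac{k(D_{i-1,3})^{3}}{3^{i}},
\]
and then inserts the already established first bound for $k(D_{i-1,3})$. The missing idea is therefore to compare $k(D_{i,3}')$ with $k(D_{i-1,3})$ rather than with $k(D_{i-1,3}')$; the factor you need is $3^{i}$, coming from this single index, not a combination of $3^{2i}$ and a conjugation analysis.
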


\begin{proof}
	The proof can be carried out analogously to  \cite[Lemma 5.10]{MAL18} by using $k(D_{1,3}) = 17$ as an improved induction start for the first part.
\end{proof}

\begin{theorem}
(C1) and (C2) hold for the principal $3$-block of $\Glweps$.
\end{theorem}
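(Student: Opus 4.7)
The plan is to compare $k(B)$ with $k(D) \cdot l(B)$ and with $k(D') \cdot k_0(B)$ by reducing each inequality to a numerical comparison of $3$-adic exponents, using the bounds already collected. By the same argument as in \cite[Prop.~5.11]{MAL18} one may assume $3 \mid w$, so $a_0 = 0$ and no exponents are wasted on $i = 0$ in \eqref{eq:kD3} and \eqref{eq:kD'3}. The proof naturally splits according to whether $a \geq 2$ or $a = 1$.

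For $a \geq 2$, combining Lemma \ref{lem:bound23} with the estimate $p_3(w) \leq 3^{w/6}$ from Lemma \ref{lem:plw} gives, for $w \neq 3$,
\[ k(B) \leq 3^{\left(a - \frac{2}{3}\right) w + 2 - \log_3 d}. \]
Since trivially $l(B) \geq 1$, inequality (C2) will follow once this quantity is bounded above by $k(D)$, and against the lower bound $k(D) \geq 3^{(a-1/2)w + (1/2)\sum_{i \geq 1} a_i}$ from \eqref{eq:kD3} this reduces to
\[ 2 - \log_3 d \leq \tfrac{w}{6} + \tfrac{1}{2} \sum_{i \geq 1} a_i, \]
which holds whenever $w \geq 12$ (and improves further when $d = 2$ or $\sum a_i > 0$). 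The remaining bounded set of parameters, together with the case $w = 3$ where Lemma \ref{lem:plw} is unavailable, is checked directly in \cite{GAP4}. For (C1), the same $k(B)$-bound is compared with $k(D') \cdot k_0(B)$, using \eqref{eq:kD'3} and the elementary estimate $k_0(B) = \prod_{i \geq 0} k(b \cdot 3^i, a_i) \geq \prod_{i \geq 1}(b \cdot 3^i)^{a_i}$; the factor of $k_0(B)$ is precisely what compensates for the correction term $\sum a_i(a + i - 5/2)$ in the exponent of \eqref{eq:kD'3}, again reducing the claim to a numerical inequality in $w$, $a$, $d$ and the $a_i$ that holds outside a finite range of parameters.

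For $a = 1$, the bound $k^w(B) \leq 3^{(w+7)/2}$ of Lemma \ref{lem:bound1.65} is linear in $w$, but so is the crude bound from \eqref{eq:kD3}, and the inequalities are too tight to close directly. Here the refined estimates of Lemma \ref{lem:nrcharacters} are used: taking the product over $i \geq 1$ with multiplicities $a_i$ gives
\[ k(D) \geq \prod_{i \geq 1} 3^{a_i \left(3^{(i-1)/2} + (3^i+1)/2\right)}, \]
whose exponent exceeds $(w+7)/2$ once $v \geq 1$ or $a_1$ is sufficiently large, since $\sum_{i \geq 1} a_i 3^i = w$ and the additional super-linear term $\sum a_i 3^{(i-1)/2}$ provides the extra slack. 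The analogous argument with $k(D')$ and $k_0(B) \geq 1$ gives (C1). Again the small cases $w$ small are verified in \cite{GAP4}.

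The main obstacle is the careful bookkeeping of the small cases: the uniform exponent comparisons above fail by small additive constants for small $w$ or small $a$, so one either invokes Lemma \ref{lem:nrcharacters} to strengthen the lower bound on the defect group invariants, or computes the quantities directly. A secondary annoyance is that the case $w = 3$ must be treated outside Lemma \ref{lem:plw}, but this amounts to a single explicit check.
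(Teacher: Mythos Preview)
Your overall strategy matches the paper's---split on $a \geq 2$ versus $a = 1$, bound $k(B)$ via Lemmas \ref{lem:bound23} and \ref{lem:bound1.65}, and compare exponents against \eqref{eq:kD3}, \eqref{eq:kD'3} and Lemma \ref{lem:nrcharacters}---and the (C2) arguments are essentially correct (your use of $l(B) \geq 1$ in place of the paper's $l(B) \geq \pi(w)$ is cruder but works once $p_3(w)$ is absorbed into the exponent). Two points, however, need attention.

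First, and most seriously, your (C1) argument for $a = 1$ does not close. From Lemma \ref{lem:nrcharacters} the exponent of $k(D')$ is $\sum_{i \geq 1} a_i\bigl(3^{(i-1)/2} + (3^i+1)/2 - i\bigr)$, and with $k_0(B) \geq 1$ you need this to exceed $(w+7)/2$, i.e.\ $\sum_{i \geq 1} a_i\bigl(3^{(i-1)/2} + \tfrac{1}{2} - i\bigr) \geq \tfrac{7}{2}$. But the summand $3^{(i-1)/2} + \tfrac{1}{2} - i$ is at most $\tfrac{1}{2}$ for $i \leq 3$ and about $1.7$ at $i = 4$, so the inequality fails for $w = 27, 36, 45, 54, 81, 108, 162, \ldots$---far beyond any reasonable ``small cases'' list. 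The $-i$ in $k(D_{i,3}')$ must be cancelled by a matching $+i$ in $k_0(B)$, and this is exactly what the paper does: for $a = 1$ it proves $k_0(B) = \prod_{i \geq 1} k(3^{i+1}, a_i) \geq 3^{\sum_{i \geq 1} a_i i + \#\{i : a_i \neq 0\}}$ (see \eqref{eq:betterboundk0B}), so that the product $k_0(B)\,k(D')$ has exponent $\tfrac{2}{3}w + \tfrac{1}{2}\sum a_i + \#\{i : a_i \neq 0\}$ and the comparison with $3^{(w+7)/2}$ goes through for $w \notin \{3,6,9\}$.

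Second, two smaller slips: your lower bound $k_0(B) \geq \prod_{i \geq 1}(b \cdot 3^i)^{a_i}$ is too strong, since $k(s,2) = \tfrac{1}{2}s^2 + \tfrac{3}{2}s < s^2$; the paper uses the correct $k(b\cdot 3^i, a_i) \geq (b\cdot 3^{i-1})^{a_i}$ (one factor of $3$ less), which still suffices for $a \geq 2$. And you should say a word about the unitary case: the paper notes at the end that the invariants of $\operatorname{GU}_{wd}(q)$ coincide with those of some $\operatorname{GL}_{wd}(q_0)$, so the claim for $\Glweps$ reduces to the linear case already treated.
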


\begin{proof}
	First consider $\Glw$. 
	As in \cite[Prop. 5.11]{MAL18}, the number of characters of height zero in $B$ can be bounded from below by
		$$k_0(B) = \prod_{i \geq 1} k(b\ell^i,a_i) \geq \prod_{i \geq 1} \left(\frac{b}{3^a}\right)^{a_i} \cdot 3^{\sum_{i \geq 1} a_i (a+i-1)} \geq 3^{\sum_{i \geq 1} a_i (a+i-1 -\log_3(d))}.$$
	Moreover, it holds that $l(B) \geq k(d,w) \geq p_\ell(w)$ (cf. \cite[Prop. 5.11]{MAL18}). 
	First assume $a \geq 2$. For $w \geq 6$, Lemma \ref{lem:bound23} yields
	$$k(B) \leq p_3(w) \cdot 3^{\left(a-\frac{5}{6}\right) w
		+ 2 - \log_3(d)} \leq 3^{\left(a-\frac{1}{2}\right)w  + \left(\frac{3}{2} - \log_3(d)\right) \sum_{i \geq 1} a_i} \leq k_0(B) \cdot k(D')$$ and
	$$k(B) \leq p_3(w) \cdot 3^{\left(a-\frac{5}{6}\right) w
		+ 2- \log_3(d)} \leq \pi(w) \cdot 3^{\left(a-\frac{1}{2}\right)w + \frac{1}{2} \sum a_i} \leq l(B) \cdot k(D),$$
	since $-\frac{w}{3} +2 - \log_3(d) \leq \frac{1}{2} \leq \min\{\left(\frac{3}{2} - \log_3(d)\right), \frac{1}{2}\} \cdot \sum a_i$ for $w \geq 6$. For $w = 3$, the above inequalities remain valid when inserting $p_3(3) = 2$. 
	For $a = 1$, we use the improved bounds from Lemma \ref{lem:Sylowb}
		\begin{equation}\label{eq:kD'31}
	k(D') = \prod_{i \geq 1} k(D_i')^{a_i} \geq \prod_{i \geq 1} \left(3^{3^{\frac{i-1}{2}}} 3^{\frac{3^i +1}{2}-i}\right)^{a_i}= 3^{\frac{2}{3} w + \sum_{i \geq 1} a_i \left(\frac{1}{2}- i\right)},
	\end{equation} and 
	\begin{equation}\label{eq:kD31}
	k(D) = \prod_{i \geq 1} k(D_i)^{a_i} \geq \prod_{i \geq 1} \left(3^{3^{\frac{i-1}{2}}} 3^{\frac{3^i +1}{2}}\right)^{a_i} = 3^{\frac{2}{3}w + \frac{1}{2}\sum_{i \geq 1} a_i}.
	\end{equation}
	
	Moreover, note that $b = 3 = 3^a$ for both $d = 1$ and $d = 2$ in this case. With 
	$$k(3^{i+1}, a_i) = \begin{cases}
	3^{i+1} & \text{ if }a_i = 1 \\
	\frac{3^{2i+2}}{2} + \frac{3^{i+2}}{2} \geq 3^{2i+1} & \text{ if }a_i = 2,
	\end{cases}$$
	we obtain 
	\begin{equation}\label{eq:betterboundk0B}
	k_0(B) =  \prod_{i \geq 1} k(3^{i+1}, a_i)\geq 3^{\sum_{i \geq 1} a_i i + \sum_{i: a_i \neq 0} 1}.
	\end{equation}
	
	For $w \notin \{3,6,9\}$, we have 
	$$k(B) \leq 3^{\frac{w+7}{2}} \leq 3^{\sum_{i \geq 1} a_i i + \sum_{a_i \neq 0} 1 } \cdot 3^{\frac{2}{3}w + \sum_{i \geq 1} \left(\frac{1}{2} -i\right) a_i} \leq k_0(B) \cdot k(D').$$ For $w \geq 6$, we furthermore obtain
	$$k(B) \leq 3^{\frac{w+7}{2}} \leq \pi(w) \cdot 3^{\frac{2}{3}w + \frac{1}{2} \sum a_i} \leq l(B) \cdot k(D),$$ since then $\pi(w) \geq \pi(6) = 11$. In the remaining cases, we check the inequalities directly: 
%
	For $w = 3$, we have $k^3(B) = 24<3^3$ and $k(D') \geq 3^{\frac{3}{2}}$ as well as $k_0(B) = 3^2$. Moreover, $l(B) \geq \pi(3) = 3$ and $k(D) \geq 3^{5/2}$, so also (C2) holds. For $w = 6$, it holds that $k^6(B) = 270<3^6$, $k(D') \geq 3^3$ and $k_0(B) = 54> 3^3$. Finally, in case $w = 9$, we obtain $k^9(B) = 2043<3^7$, $k(D') \geq 3^\frac{9}{2}$ and $k_0(B) = 3^3$. 
	This finishes the proof for $\Glw$.
\bigskip

Denoting the order of $-q$ modulo 3 by
$d$, the block theoretic invariants of $\operatorname{GU}_{wd}(q)$ are the same as of $\operatorname{GL}_{wd}(q_0)$, where $q_0$
has order $d$ modulo 3 and $3^a$ is the exact power of 3 dividing $q_0^d- 1$ (cf. \cite[Prop. 5.11]{MAL18}), so
the claim follows from the proven inequality for the linear case.
\end{proof}	

For $\ell = 2$, the formulas hold for the general linear as well as for the general unitary groups. 	
\begin{theorem}
(C1) and (C2) hold for the principal $2$-block of $\operatorname{GL}_w(\varepsilon q)$ if $\varepsilon q \equiv 1 \mod 4$. 
\end{theorem}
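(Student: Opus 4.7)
My plan is to mirror the structure of the previous theorem: bound $k(B)$ from above via Lemmas~\ref{lem:bound1.5} and~\ref{lem:bound1.65}, bound $k_0(B)$, $l(B)$, $k(D')$, and $k(D)$ from below via the formulas stated at the end of Section~1 together with~\eqref{eq:kD'3} and~\eqref{eq:kD3} specialised to $\ell = 2$, $\tilde{a} = 1$, and compare the resulting exponents of~$2$. As in \cite[Prop.~5.11]{MAL18} we may assume $w$ is even, so $a_0 = 0$ and $\sum_{i \geq 1} a_i \geq 1$.

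First assume $a \geq 3$. Substituting $\ell = 2$ into~\eqref{eq:kD'3} and~\eqref{eq:kD3} gives
\[
k(D') \geq 2^{(a-1)w - \sum_{i \geq 1} a_i(a+i-3)}, \qquad k(D) \geq 2^{(a-1)w + \sum_{i \geq 1} a_i}.
\]
Combined with the component-wise lower bound for $k_0(B)$ (obtained in complete analogy with the $\ell = 3$ proof by evaluating each local factor $k(2^{a+i}, a_i)$ from below), this gives a product $k_0(B) \cdot k(D')$ whose exponent exceeds $(a-1)w + 3 \sum_{i \geq 1} a_i$. Comparing with $k(B) \leq 2^{(a-1)w + 3}$ from Lemma~\ref{lem:bound1.5} and using $\sum_{i \geq 1} a_i \geq 1$, inequality (C1) follows. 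For (C2) I would proceed as in the previous proof using $l(B) \geq k(d, w) = \pi(w)$: since $\pi(w) \geq 11$ for $w \geq 6$, the product $l(B) \cdot k(D)$ dominates $k(B)$, and the residual small values of $w$ are checked directly.

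The subtle case is $a = 2$, where Lemma~\ref{lem:bound1.65} only gives $k(B) \leq 2^{1.4 w + 1.65}$; since $1.4 > a - 1 = 1$, the leading $w$-term can no longer be absorbed by $k(D')$ or $k(D)$. The remedy mirrors the $\ell = 3$, $a = 1$ portion of the previous theorem: strengthen the lower bound for $k_0(B)$ by a direct evaluation in the spirit of~\eqref{eq:betterboundk0B}, and rerun the inductive comparison from Lemma~\ref{lem:bound1.65} with these sharper constants to handle all $w$ above an explicit threshold. The finitely many remaining small $w$ are verified in GAP~\cite{GAP4} by computing $k^w(B)$ from~\eqref{eq:formulakB2} alongside the invariants of the explicit defect groups $C_4 \wr C_2 \wr \ldots \wr C_2$. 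This final small-$w$ analysis for $a = 2$ is the principal obstacle, since the asymptotic estimates are essentially tight there. The unitary case $\varepsilon = -1$ requires no additional work, as every estimate depends only on $\varepsilon q$.
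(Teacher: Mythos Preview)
Your treatment of $a \geq 3$ follows the paper's argument and is fine. The genuine gap is at $a = 2$. You correctly diagnose the difficulty---the upper bound $k(B) \leq 2^{1.4w+1.65}$ from Lemma~\ref{lem:bound1.65} has leading coefficient $1.4$, while~\eqref{eq:kD'3} and~\eqref{eq:kD3} with $\ell=2$, $a=2$ only give $k(D')$, $k(D) \geq 2^{w + O(\sum a_i)}$---but your proposed remedy cannot close this gap. For $\ell=2$ the height-zero count is already given \emph{exactly} as $k_0(B)=2^{\sum a_i(i+1)}$, so there is no ``direct evaluation in the spirit of~\eqref{eq:betterboundk0B}'' available to strengthen it. More importantly, even the exact value contributes only $\sum a_i(i+1) = O((\log_2 w)^2)$ to the exponent, which cannot offset the linear deficit $0.4w$; for instance when $w=2^v$ one has $k_0(B)\cdot k(D') \leq 2^{w+O(v)}$ against $k(B)$ of order $2^{1.4w}$.

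What the paper does instead---and what the $\ell=3$, $a=1$ template actually contains, though you single out only the $k_0(B)$ part of it---is to sharpen the \emph{defect group} bounds themselves. For $a=2$ one shows $k(D_{i,4}) \geq 2^{1.4\cdot 2^i+1}$ and (for $i\geq 2$) $k(D_{i,4}') \geq 2^{1.4\cdot 2^i-i+1}$, whence
\[
k(D) \geq 2^{1.4w+\sum_{i\geq 1}a_i},\qquad k(D') \geq 2^{1.4w+\sum_{i\geq 2}a_i(1-i)-0.8a_1}.
\]
Now the leading coefficients match, and the comparison with $2^{1.4w+1.65}$ goes through using $\pi(w)\geq 2$ for (C2) and the exact $k_0(B)$ for (C1). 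No GAP check over a range of $w$ is needed; only a couple of direct verifications. Your proposal should therefore replace the $k_0(B)$-strengthening plan by these improved $k(D)$, $k(D')$ bounds.
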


\begin{proof}
	For $a \geq 3$, we use the bound from Lemma \ref{lem:bound23} together with Equations \eqref{eq:kD'3} and \eqref{eq:kD3} to obtain
	$$k(B) \leq 2^{(a-1)w + 3} \leq 2^{(a-1)w  + 3 \sum_{ \geq 1} a_i} \leq   k_0(B) \cdot k(D')$$
	and, for $w \geq 4$,
	$$k(B)\leq 2^{(a-1)w + 3}  \leq \pi(w) \cdot 2^{(a-1)w + \sum a_i} \leq l(B) \cdot k(D),$$
	since then $\pi(w) \geq \pi(4) > 4$. In case $w = 2$, the claim follows similarly for $a \geq 4$ by using the stronger bound from Lemma \ref{lem:bound23}. For $a = 3$ and $w = 2$, we have $k(B) = 48 \leq 2^{6} \leq l(B) \cdot k(D)$. 
	\bigskip
	 
	For $a = 2$, we can use the improved upper bounds
	$$k(D')= \prod_{i \geq 1} k(D_{i,4}')^{a_i} \geq 2^{2 a_1} \cdot \prod_{i \geq 2} \left(2^{1.4 \cdot 2^i -  i + 1}\right)^{a_i} \geq 2^{1.4 w + \sum_{i \geq 2} a_i (-i + 1)  -0.8 a_1}$$ and 
	$$k(D) = \prod_{i \geq 1} k(D_{i,4})^{a_i} \geq \prod_{i \geq 1} \left(2^{1.4 \cdot 2^i + 1}\right)^{a_i} = 2^{1.4 w + \sum_{i \geq 1} a_i}.$$ 
	 With this and the bound from Lemma \ref{lem:bound1.65}, we obtain
	$$k^w(B) \leq 2^{1.4w + 1.65} \leq 2^{1.4w + 3 \sum_{i \geq 2} a_i + 2.2 a_1} \leq k_0(B) \cdot k(D').$$
	
	Here, we used that for $w \geq 4$, there exists an $a_i > 0$ with $i \geq 2$ and that for $w = 2$, we have $a_1 = 1$.
	Since $\pi(w) \geq 2$ for $w \geq 2$, we obtain for (C2)
	$$k(B) =  2^{1.4 w + 1.65} \leq \pi(w) \cdot  2^{1.4w + 1} \leq l(B) \cdot k(D), $$
	so the inequalities hold.
\end{proof}

\subsection{\texorpdfstring{The conjecture for $\varepsilon q \equiv 3$ \text{mod 4}}{The conjecture for eq = 3 mod 4}} 
We examine the case $\ell = 2$ and $\varepsilon q \equiv 3 \mod 4$ by using a recursion to reduce to the previous case. Here, it holds that $a = 1$ and $\tilde{a} \geq 2$ in Equation \ref{eq:formulakB2}. 

\begin{lemma}\label{lem:Sylowb}
It holds that $k(P_1) = 2$, $k(P_2) = 2^{\tilde{a}} +3$, $k(P_4) = k(2^{\tilde{a}} + 3, 2) = 2^{2\tilde{a} - 1} + 9 \cdot 2^{\tilde{a}-1} + 9$ and 
$$k(P_{2^i}) \geq \frac{k(P_4)^{2^{i-2}}}{2^{2^{i-2} - 1}} \geq 2^{(\tilde{a}-1)\cdot 2^{i-1}+1}$$ for $i \geq 2$
as well as $k(P_1') = 1$, $k(P_2') = 2^{\tilde{a}}$ and, for $i \geq 2$, 
$$k(P_{2^i}') \geq \frac{k(P_{2^i})^2}{2^i} \geq 2^{(\tilde{a}-1)\cdot 2^{i-1} -i + 2}.$$
\end{lemma}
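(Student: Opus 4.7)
The small cases are direct: $P_1 = C_2$ is abelian of order $2$, yielding $k(P_1)=2$ and $k(P_1')=1$, while a standard class count for the semidihedral group $P_2 = \operatorname{SD}_{2^{\tilde{a}+2}}$ gives $k(P_2) = 2^{\tilde{a}}+3$ and $k(P_2')= k(\langle y^2\rangle) = 2^{\tilde{a}}$. To compute $k(P_4)$, and as the engine for the induction, I would invoke the classical wreath-product formula $k(G \wr C_2) = k(G)(k(G)+3)/2$: conjugacy classes in the base coset $G^2$ correspond to unordered pairs $\{[g_1]_G,[g_2]_G\}$ of $G$-classes (contributing $\binom{k(G)+1}{2}$), and classes in the $\tau$-coset are parametrized by the $G$-class of the product $g_1g_2$ (contributing $k(G)$). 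Substituting $k(P_2) = 2^{\tilde a}+3$ yields the claimed expansion of $k(P_4) = k(2^{\tilde a}+3,2)$.

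The identity above immediately gives $k(G \wr C_2) \geq k(G)^2/2$, so $k(P_{2^i}) \geq k(P_{2^{i-1}})^2/2$, and a straightforward induction starting at $P_4$ produces the factored bound $k(P_{2^i}) \geq k(P_4)^{2^{i-2}}/2^{2^{i-2}-1}$. Inserting the trivial estimate $k(P_4) \geq 2^{2\tilde{a}-1}$ from its explicit formula then produces the stated exponential lower bound $2^{(\tilde{a}-1)2^{i-1}+1}$.

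The derived-subgroup estimate is the main obstacle, and I would handle it by locating $H' := (G \wr C_2)'$ explicitly. Since $\tau$ identifies the two copies of $G$ under abelianization and contributes one extra $C_2$-factor, one has $H^{\mathrm{ab}} \cong G^{\mathrm{ab}} \times C_2$, so $H'$ sits inside the base $G^2$ as $\{(a,b) \in G^2 : ab \in G'\}$, a subgroup of index $|G^{\mathrm{ab}}|$ in $G^2$. Each $G^2$-conjugacy class $[a]_G \times [b]_G$ is contained in $H'$ precisely when $\pi(a)\pi(b) = 1$ in $G^{\mathrm{ab}}$, where $\pi \colon G \to G^{\mathrm{ab}}$. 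Writing $N(g)$ for the number of $G$-classes mapping to $g$ and noting $N(g)=N(g^{-1})$ (class-inversion is a bijection on conjugacy classes intertwining $\pi$ and inversion), the number of such classes equals $\sum_g N(g)^2$, which is at least $k(G)^2/|G^{\mathrm{ab}}|$ by Cauchy--Schwarz. Because $H' \leq G^2$, each such $G^2$-class is a union of $H'$-classes, giving $k(H') \geq k(G)^2/|G^{\mathrm{ab}}|$.

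Specializing to $H = P_{2^i}$ and $G = P_{2^{i-1}}$, and observing $|P_{2^{i-1}}^{\mathrm{ab}}| = 2^i$ (starting from $|P_2^{\mathrm{ab}}|=4$ and doubling under each wreath step by the structure of $H^{\mathrm{ab}}$ above), one obtains $k(P_{2^i}') \geq k(P_{2^{i-1}})^2/2^i$; combining with the first-part bound $k(P_{2^{i-1}}) \geq 2^{(\tilde{a}-1)2^{i-2}+1}$ (and handling $i=2$ directly via $k(P_2) \geq 2^{\tilde a}$) yields the final inequality $k(P_{2^i}') \geq 2^{(\tilde{a}-1)2^{i-1}-i+2}$. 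The chief subtlety is extracting a $k(G)^2$-type bound for $k(H')$ despite $|H'|$ being only $|G|\cdot|G'|$; this is exactly what the Cauchy--Schwarz step on the distribution of $G$-classes over $G^{\mathrm{ab}}$ accomplishes.
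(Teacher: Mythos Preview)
Your argument is correct and follows the same route as the paper's: both obtain $k(P_4)$ from the wreath-product class formula $k(G\wr C_2)=k(G)(k(G)+3)/2$ (the paper cites \cite[Lemma~4.2.9]{JAM84} for this), iterate the consequence $k(G\wr C_2)\ge k(G)^2/2$ for the $k(P_{2^i})$ bound, and handle $k(P_{2^i}')$ by locating $P_{2^i}'$ inside the base group $P_{2^{i-1}}^2$ with index $|P_{2^{i-1}}^{\mathrm{ab}}|=2^i$. Where the paper simply defers this last step to \cite[Lemma~1.4]{OLS76} and the analogous argument in \cite[Lemma~5.10]{MAL18}, your explicit identification $(G\wr C_2)'=\{(a,b):ab\in G'\}$ together with the Cauchy--Schwarz count of $G^2$-classes landing in $H'$ is a self-contained replacement that yields the identical bound $k(P_{2^{i-1}})^2/2^i$. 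You have also silently repaired a typo in the displayed statement: the middle expression should read $k(P_{2^{i-1}})^2/2^i$ rather than $k(P_{2^i})^2/2^i$ (the latter would eventually exceed $|P_{2^i}'|$ for large $\tilde a$), and this is consistent both with the paper's own proof sketch and with the final exponent $(\tilde a-1)\,2^{i-1}-i+2$.
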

\begin{proof}
Note that $P_{2^i}$ lies in $P_{2^{i-1}}^2$ by \cite[Lemma 1.4]{OLS76} with index $2^i$. With this, the proof can be carried out analogously to \cite[Lemma 5.10]{MAL18}. The formula for $k(P_4)$ follows from \cite[Lemma 4.2.9]{JAM84} together with Lemma \ref{lem:k3aw2}.
\end{proof}
\begin{lemma}\label{lem:recb}
Let $c_1, c_2 \in \mathbb{Z}_{>0}$ with $c_1 \geq c_2$ and assume that there exist constants $y, c \geq 0$ such that for every $t \geq 0$, we have 
$$\sum_{ \textbf{w} \in W_t} k(2^{\tilde{a}}-1,w_0) \cdot \prod_{i \geq 1} k(2^{\tilde{a}-1},w_i) \leq 2^{(\tilde{a}-y)t + c}.$$
Then 
$$k(B) \leq 2^{(\tilde{a}-y) \frac{w-a_0}{2} + a_0+ 0.35+c} \cdot \sum_{j = 0}^{(w-a_0)/2} \left(2^{2+y-\tilde{a}}\right)^j.$$
\end{lemma}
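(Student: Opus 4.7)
The plan is to mimic the recursive unfolding used in Lemma \ref{lem:bound1.65}: split the outer sum in the formula for $k(B)$ by the value of $w_0$, then recognize the remaining inner sum as precisely the quantity controlled by the hypothesis (with indices shifted by one). Throughout we use that, since $\varepsilon q \equiv 3 \mod 4$, we have $a = 1$, so the formula in Lemma \ref{lem:kBformula} reads
$$k(B) = \sum_{\textbf{w} \in W_w} k(2, w_0) \cdot k(2^{\tilde{a}}-1, w_1) \cdot \prod_{i \geq 2} k(2^{\tilde{a}-1}, w_i).$$

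First I would use the same bijection as in Lemma \ref{lem:bound1.65} between $W_w$ and $\bigsqcup_{j} W_{r(j)}$, where $r(j) = (w - (a_0 + 2j))/2$ and the union ranges over $0 \leq j \leq (w-a_0)/2$: to every $\textbf{w} = (w_0, w_1, \ldots, w_v) \in W_w$ one associates its value $w_0 = a_0 + 2j$ together with the shifted tuple $\tilde{\textbf{w}} = (w_1, \ldots, w_v) \in W_{r(j)}$. Under this identification, the contribution of the indices $i \geq 1$ of $\textbf{w}$ becomes
$$k(2^{\tilde{a}}-1, \tilde{w}_0) \cdot \prod_{i \geq 1} k(2^{\tilde{a}-1}, \tilde{w}_i),$$
which is exactly the summand appearing in the hypothesis evaluated at $t = r(j)$. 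Summing over $\tilde{\textbf{w}} \in W_{r(j)}$ and applying the hypothesis yields a bound of $2^{(\tilde{a}-y) r(j) + c}$ on the inner sum.

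Next I would insert the estimate $k(2, w_0) \leq 2^{w_0 + 0.35}$ from Lemma \ref{lem:multipartitions}(i) for the outer factor, substitute $w_0 = a_0 + 2j$, and assemble:
$$k(B) \leq \sum_{j = 0}^{(w-a_0)/2} 2^{a_0 + 2j + 0.35} \cdot 2^{(\tilde{a}-y)\bigl((w-a_0)/2 - j\bigr) + c}.$$
Pulling the $j$-independent factor $2^{(\tilde{a}-y)(w-a_0)/2 + a_0 + 0.35 + c}$ out of the sum leaves $\sum_{j} 2^{(2 - (\tilde{a}-y)) j} = \sum_j (2^{2+y-\tilde{a}})^j$, which is the claimed expression.

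The main obstacle is only bookkeeping: one has to confirm that the bijection $W_w \leftrightarrow \bigsqcup_j W_{r(j)}$ correctly shifts the indices so that what plays the role of $w_1$ in $k(B)$ becomes the ``$w_0$'' of the hypothesis and what plays the role of $w_i$ for $i \geq 2$ becomes the ``$w_i$'' for $i \geq 1$ in the hypothesis. Once this reindexing is in place, the remaining computation is purely arithmetic and the hypothesized constants $y$ and $c$ transfer directly into the stated bound. The auxiliary parameters $c_1, c_2$ do not enter the estimate itself; they will be fixed in the subsequent application of this lemma, where the hypothesis is verified for appropriate values.
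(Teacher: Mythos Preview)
Your proposal is correct and follows essentially the same route as the paper: you split the sum over $W_w$ according to the value of $w_0 = a_0 + 2j$ via the bijection from Lemma~\ref{lem:bound1.65}, apply the hypothesis to the shifted inner sum over $W_{r(j)}$, bound $k(2,w_0)$ by $2^{w_0+0.35}$, and factor out to obtain the stated expression. The bookkeeping concern you flag about the index shift is exactly what the paper handles by citing the proof of Lemma~\ref{lem:bound1.65}, and your remark that $c_1,c_2$ play no role in the estimate is also in line with the paper, where these parameters appear in the statement but not in the argument.
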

\begin{proof}
For $j \in \{0, \ldots, \frac{w-a_0}{2}\}$, let $r(j) = \frac{w-(2j +a_0)}{2}$. Again, we exploit the correspondence between the set of binary decompositions $W_w$ of $w$ and
	$\bigcup_{j = 0}^{(w-a_0)/2} W_{r(j)}$ (see proof of Lemma \ref{lem:bound1.65}). This together with Lemma \ref{lem:multipartitionsbasics} and the assumption yields
\begin{alignat*}{1}
k(B)  &= \sum_{j = 0}^{(w-a_0)/2} k(2,2j+a_0) \cdot \sum_{(w_1, \ldots, w_v) \in W_{r(j)}} k(2^{\tilde{a}}-1,w_1) \cdot \prod_{i \geq 2} k(2^{\tilde{a}-1},w_i) \\
&\leq \sum_{j = 0}^{(w-a_0)/2} 2^{2j+a_0 + 0.35} \cdot 2^{(\tilde{a}-y) \frac{w-(2j+a_0)}{2} + c} = 2^{(\tilde{a}-y) \frac{w-a_0}{2} + a_0+ 0.35+c} \cdot \sum_{j = 0}^{(w-a_0)/2} \left(2^{2+y-\tilde{a}}\right)^j. \tag*{\qedhere}
\end{alignat*}
\end{proof}

\begin{Remark}\label{rem:weven}
Since $k(2,2w_0+1) \leq 2 \cdot k(2,2w_0)$ for all $w_0 \geq 0$ and the sets of binary decompositions of $2j+1$ and $2j$ for $j \geq 0$ are in bijective correspondence by Lemma \ref{lem:bound1.65}, it follows as above that 
\begin{alignat*}{1}
k^{2j+1}(B) &=  \sum_{\textbf{w} \in W_{2j+1}} k(2,w_0) k(2^{\tilde{a}}-1,w_1) \prod_{i \geq 2} k(2^{\tilde{a}-1}, w_i) \\
&\leq \sum_{\textbf{w} \in W_{2j}} 2 \cdot k(2,w_0) k(2^{\tilde{a}}-1,w_1) \prod_{i \geq 2} k(2^{\tilde{a}-1}, w_i)= k^{2j}(B).
\end{alignat*}
\end{Remark}

We can now prove the inequalities of the conjecture. As before, we treat the case $\tilde{a} = 2$ separately.

\begin{lemma}
(C1) and (C2) hold for the principal $2$-block of $\operatorname{GL}_w(\varepsilon q)$ if $\varepsilon q \equiv 3 \mod 4$ and $\tilde{a} \geq 3$. 
\end{lemma}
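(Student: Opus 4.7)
The plan is to apply Lemma \ref{lem:recb} with $y=1$ in order to reduce the bound on $k(B)$ to the estimate already proved for the case $\varepsilon q\equiv 1\pmod 4$, with $a$ now playing the role of $\tilde a$. Since $k(s,t)$ is monotone in $s$, the inner sum in the hypothesis of Lemma \ref{lem:recb} is dominated by the corresponding principal-block formula at parameter $a=\tilde a$, so Lemma \ref{lem:bound23}(i) supplies a bound of the form $2^{(\tilde a-1)t+c}$ with $c=3/2$ for $\tilde a\geq 4$ and the weaker $c=3$ for $\tilde a=3$. By Remark \ref{rem:weven} I may assume $w$ is even, so $a_0=0$.

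For $\tilde a\geq 4$ the common ratio $2^{3-\tilde a}$ of the geometric sum in Lemma \ref{lem:recb} is at most $1/2$, so the sum is bounded by $2$ and
\[
k(B)\leq 2^{(\tilde a-1)w/2+2.85}.
\]
Combining the estimates of Lemma \ref{lem:Sylowb} with the formula for $k_0(B)$, a direct computation yields
\[
k_0(B)\cdot k(D')\geq 2^{(\tilde a-1)w/2+3\sum_{i\geq 1}a_i}\quad\text{and}\quad l(B)\cdot k(D)\geq \pi(w)\cdot 2^{(\tilde a-1)w/2+\sum_{i\geq 1}a_i},
\]
where I use $l(B)\geq\pi(w)$. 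Hence (C1) reduces to $2.85\leq 3\sum_{i\geq 1}a_i$ and (C2) to $2.85\leq \log_2\pi(w)+\sum_{i\geq 1}a_i$; the former holds for every even $w\geq 2$, and the latter for $w\geq 4$. The outstanding case $w=2$ I verify directly: there $k(B)=2^{\tilde a}+4$, while $k_0(B)k(D')=2^{\tilde a+2}$ and $l(B)k(D)\geq 2(2^{\tilde a}+3)$, both of which dominate $k(B)$.

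For $\tilde a=3$ the geometric sum in Lemma \ref{lem:recb} has ratio $1$ and therefore contributes an unwanted factor $(w/2+1)$. Closing this gap requires a sharper inner-sum estimate, for instance splitting $k(7,w_0)=\sum_{i+j=w_0}k(4,i)k(3,j)$ and applying Lemma \ref{lem:bound3,4}, or running an induction on $w$ in the spirit of Lemma \ref{lem:bound1.65} to establish an upper bound of the form $k^w(B)\leq C\cdot 2^{(\tilde a-1)w/2}$ with a small constant $C$. Once such a bound is in hand, the comparison with $k_0(B)k(D')$ and $l(B)k(D)$ proceeds exactly as in the case $\tilde a\geq 4$, and the finitely many remaining small values of $w$ are checked by computer.

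The main obstacle will be the $\tilde a=3$ sub-case: the geometric series in Lemma \ref{lem:recb} fails to converge, and the spurious linear factor must be absorbed either by a finer analysis of the inner sum (exploiting that $2^{\tilde a}-1=7$ rather than $8$, which buys a genuine constant factor over the bound from Lemma \ref{lem:bound23}(i)) or by a direct inductive argument. The subsequent arithmetic with Lemma \ref{lem:Sylowb} is then routine.
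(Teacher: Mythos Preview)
Your treatment of the case $\tilde a\geq 4$ is correct and essentially identical to the paper's: both apply Lemma~\ref{lem:recb} with $y=1$ and $c=3/2$ from Lemma~\ref{lem:bound23}(i), bound the convergent geometric series by $2$, and compare against the estimates $k_0(B)\,k(D')\geq 2^{(\tilde a-1)(w-a_0)/2+a_0+3\sum_{i\geq 1}a_i}$ and $k(D)\geq 2^{(\tilde a-1)(w-a_0)/2+\sum_{i\geq 0}a_i}$ coming from Lemma~\ref{lem:Sylowb}. The small cases $w\in\{1,2\}$ are handled the same way.

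The case $\tilde a=3$, however, is not proved in your proposal; you only sketch two possible routes. More importantly, the specific target you aim for is unattainable: a bound of the shape $k^w(B)\leq C\cdot 2^{(\tilde a-1)w/2}=C\cdot 2^{w}$ with $C$ constant does \emph{not} hold, as the values $k^2(B)=12$, $k^4(B)=94$, $k^8(B)=2908$ already show that $k^w(B)/2^{w}$ grows with~$w$. Consequently the comparison cannot ``proceed exactly as in the case $\tilde a\geq 4$'', because the generic lower bound $k(D)\geq 2^{(\tilde a-1)(w-a_0)/2+\sum a_i}=2^{w+\sum_{i\geq 1}a_i}$ will never dominate an upper bound that genuinely exceeds $2^{w}$ by a growing factor.

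The paper resolves this differently. It keeps the bound $k(B)\leq\bigl(\tfrac{w-a_0}{2}+1\bigr)\,2^{w+3.35}$ coming from Lemma~\ref{lem:recb} with ratio $1$, then absorbs the linear factor via $\tfrac{w}{2}+1\leq 2^{0.3w-0.65}$ for $w\geq 11$, obtaining $k(B)\leq 2^{1.3w+2.7}$. The key additional ingredient is a \emph{sharper} lower bound on the defect side, using the exact values $k(P_2)=11$, $k(P_4)=2^{2\tilde a-1}+9\cdot 2^{\tilde a-1}+9$ from Lemma~\ref{lem:Sylowb} to get $k(D)\geq 2^{1.3w-0.3a_0+0.85a_1+\sum_{i\geq 2}a_i}$ and an analogous improvement for $k(D')$. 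With exponent $1.3w$ on both sides, (C1) and (C2) follow for $w\geq 11$, and the finitely many remaining even $w$ are checked directly. So the missing idea in your sketch is not to tighten the $k(B)$ bound below $2^{w}$ (which is impossible), but to lift the $k(D)$ and $k(D')$ bounds above it.
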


\begin{proof}
We apply Lemma \ref{lem:recb} using the bound from Lemma \ref{lem:bound1.5}. For $\tilde{a} \geq 4$, we have $2^{3-\tilde{a}} < 1$, hence the geometric series yields  
$$k(B) \leq 2^{(\tilde{a}-1)\frac{w-a_0}{2} + a_0 + 1.85} \cdot \sum_{j = 0}^\infty (2^{3-\tilde{a}})^j  = \frac{2^{(\tilde{a}-1)\frac{w-a_0}{2} + a_0 + 1.85}}{1-2^{3-\tilde{a}}}\leq 2^{(\tilde{a}-1)\frac{w-a_0}{2} + a_0 + 2.85}.$$

The number of conjugacy classes of $D$ is given by 

$$k(D) = \prod_{i \geq 0 } k(P_{2^i}) \geq 2^{a_0} \cdot \prod_{i \geq 1} \left(2^{(\tilde{a}-1)\cdot 2^{i-1} +1 }\right)^{a_i} = 2^{(\tilde{a}-1)\frac{w- a_0}{2} + \sum_{i \geq 0} a_i}.$$

With this, (C2) holds for $w \geq 4$ since then $l(B) \geq \pi(w) \geq 2^{1.85}$. Since the bound for $k(D)$ increases by a factor of 2 when passing from $w = 2j$ to $w = 2j+1$, it remains to consider $w \in \{1,2\}$. For $w = 1$, we have $k(B) = 2 \leq k(D)$. For $w = 2$, it holds that $\pi(2) = 2$ and so $k(B) = 2^{\tilde{a}} + 4 \leq 2^{\tilde{a}+1} \leq l(B) \cdot k(D).$
\bigskip

For the derived subgroup, we have the estimate

$$k(D') = \prod_{i \geq 0} k(P_{2^i}')^{a_i} \geq 2^{\tilde{a} a_1} \cdot \prod_{i \geq 2} \left(2^{(\tilde{a}-1)2^{i-1} -i+ 2}\right)^{a_i} \geq 2^{(\tilde{a}-1)\frac{w-a_0}{2} - \sum_{i \geq 1} (i-2) a_i},$$

so $$k_0(B) \cdot k(D') \geq 2^{(\tilde{a}-1)\frac{w-a_0}{2}  + a_0 + 3 \sum_{i \geq 1} a_i},$$
hence (C1) holds since $\sum_{i \geq 1} a_i \geq 1$ for $w \geq 2$ and $k_0(B) \geq 2 = k(B)$ for $w = 1$. 
\bigskip

We now consider the case $ \tilde{a}= 3$. There, we use the stronger bounds in Lemma \ref{lem:Sylowb} to obtain

\begin{equation}\label{eq:kDb3}
k(D) = \prod_{i \geq 0} k(P_{2^i})^{a_i} \geq 2^{a_0} \cdot 11^{a_1} \cdot \prod_{i \geq 2} \left(2^{1.3 \cdot 2^i +1}\right)^{a_i} \geq 2^{1.3 w- 0.3 a_0 + 0.85 a_1 + \sum_{i \geq 2} a_i}.
\end{equation}
Furthermore, Lemma \ref{lem:recb} yields
\begin{equation}\label{eq:kB3}
k(B) \leq \left(\frac{w-a_0}{2} + 1\right) \cdot 2^{w + 3.35}.
\end{equation}
For $w \geq 11$, this can be bounded from above by $k^w(B) \leq 2^{1.3 w + 2.7}$
since then $(w-a_0)/2 +1 \leq 2^{0.3 w- 0.65}$ by induction. With this, (C2) holds for $w \geq 11$ since $\pi(w) \geq \pi(11) \geq 2^{3.35}$. For (C1), Lemma \ref{lem:Sylowb} yields
%
\begin{equation}\label{eq:kD'b3}
k(D') = \prod_{i \geq 0} k(P_{2^i}')^{a_i} \geq 2^{3 a_1} \cdot 2^{6 a_2} \cdot \prod_{i \geq 3} \left(2^{1.3 \cdot 2^i - (i-2)} \right)^{a_i} = 2^{1.3 (w-a_0) + 0.4 a_1 + 0.8 a_2 - \sum_{i \geq 3} (i-2) a_i}.
	\end{equation}
	For $w \geq 11$, the claim follows with $\sum_{i \geq 2} a_i \geq 1$ and $a_0 \in \{0,1\}$: 
	\begin{equation}\label{eq:dobby}
	k(B) \leq 2^{1.3 w + 2.7} \leq 2^{1.3 w - 0.3 a_0+ 2.4 a_1  + 0.8 a_2 +  3 \sum_{i \geq 2} a_i } \leq k_0(B) \cdot k(D').
	\end{equation}
		
Using the exact values of the $a_i$ in the estimates of Equations \eqref{eq:kDb3}, \eqref{eq:kB3} and \eqref{eq:dobby}, the claim holds for $w \in \{6,10\}$. For the remaining cases, we note that as before, we gain a factor 2 in Equations \eqref{eq:kDb3} and \eqref{eq:kD'b3} when passing from $w = 2x$ to $w = 2x+1$ for some $x \in \mathbb{Z}_{>0}$. So by Remark \ref{rem:weven} it suffices to consider the case $w = 1$ or $w$ even. We obtain the following values
	
\begin{center}
	\begin{tabular}{  l | l | l | l }
		$w$ & $k^w(B)$ & lower bound for $k_0(B) \cdot k(D')$ & lower bound for $l(B) \cdot k(D) $\\ \hline
		$1$ & $2$ & $2$& $2$ \\ 
		$2$ & $12$ & $2^5$ & $2^{4.45}$ \\ 
		$4$ & $94$ & $2^9$ & $5 \cdot 2^{6.2}$\\ 
		$8$& $2908$ & $2^{13.4}$& $22 \cdot 2^{11.4}$\\

	\end{tabular}
\end{center} 
which finishes the proof.
\end{proof}

\begin{lemma}
	(C1) and (C2) hold for the principal $2$-block of $\Glwo$ for $\varepsilon q \equiv 3 \mod 4$ and $\tilde{a} =2$. 
\end{lemma}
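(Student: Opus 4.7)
The plan is to follow the strategy of the preceding $\tilde{a} \geq 3$ case and apply Lemma \ref{lem:recb}, but with a different way of verifying its hypothesis: Lemma \ref{lem:bound1.5} does not apply since it requires $a \geq 3$. The key trick is the trivial inequality $k(3, w_0) \leq k(4, w_0)$ (every $3$-multipartition extends to a $4$-multipartition by appending the empty partition), which yields
$$\sum_{\textbf{w} \in W_t} k(3, w_0) \prod_{i \geq 1} k(2, w_i) \leq \sum_{\textbf{w} \in W_t} k(4, w_0) \prod_{i \geq 1} k(2, w_i).$$
The right-hand side is exactly formula \eqref{eq:formulakB2} for $k^t(B')$ of the principal $2$-block of $\operatorname{GL}_t(\varepsilon q)$ in the case $(a, \tilde{a}) = (2, 1)$, hence at most $2^{1.4 t + 1.65}$ by Lemma \ref{lem:bound1.65}. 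Applying Lemma \ref{lem:recb} with $y = 0.6$ and $c = 1.65$, the geometric ratio $2^{2 + y - \tilde{a}} = 2^{0.6}$ is strictly less than $2$, so the sum over $j$ converges to a uniform constant multiple of $2^{0.3(w-a_0)}$. After simplification the $a_0$-dependence cancels, giving an explicit bound $k(B) \leq 2^{w + C_0}$ with $C_0$ a small constant (around $3.6$).

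For the right-hand sides of (C1) and (C2), I would upgrade the lower bounds on $k(D)$ and $k(D')$ by using the exact values $k(P_2) = 7$, $k(P_4) = 35$, and $k(P_2') = 4$ from Lemma \ref{lem:Sylowb} as induction bases for the iterated wreath products, in the same spirit as the improvements made for $\tilde{a} = 3$. This yields $k(P_{2^i}) \geq 2^{(\log_2 35 - 1) \cdot 2^{i-2} + 1}$ and $k(P_{2^i}') \geq 2^{2(\log_2 35 - 1) \cdot 2^{i-2} + 2 - i}$ for $i \geq 2$. Summing against $w = \sum_i a_i 2^i$ then yields lower bounds of the form $k(D) \geq 2^{\alpha w + f(\mathbf{a})}$ and $k(D') \geq 2^{2\alpha w + f'(\mathbf{a})}$, with $\alpha = (\log_2 35 - 1)/4 \approx 1.03$ and $f, f'$ explicit linear correction terms in the $a_i$.

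Combining, (C1) reduces to an inequality of the form $(2\alpha - 1) w + 3\sum_{i \geq 2} a_i \geq C_0 + (\text{small terms in } a_0, a_1)$, which holds for $w$ beyond a small explicit threshold; (C2) similarly follows once $\pi(w) \cdot 2^{\alpha w}$ exceeds $2^{w + C_0}$, which happens for all but finitely many $w$ since $\pi(w)$ grows super-polynomially. The finitely many remaining small values of $w$ are then verified directly using Lemma \ref{lem:kBformula}, in a table analogous to the one at the end of the proof for $\tilde{a} = 3$. The main obstacle is keeping the constant $C_0$ tight enough that the set of small cases requiring direct verification stays manageable; this amounts to careful bookkeeping in the geometric series of Lemma \ref{lem:recb}, but does not involve fundamentally new ideas.
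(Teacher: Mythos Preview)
Your overall strategy coincides with the paper's: verify the hypothesis of Lemma~\ref{lem:recb}, extract a bound $k(B)\le 2^{w+C_0}$, bound $k(D)$ and $k(D')$ from below via Lemma~\ref{lem:Sylowb}, and finish the small cases by hand. Your shortcut $k(3,w_0)\le k(4,w_0)$, which identifies the required sum with the quantity already bounded in Lemma~\ref{lem:bound1.65}, is a genuine simplification over the paper: there the bound $\sum_{\textbf{w}\in W_t} k(3,w_0)\prod_{i\ge 1}k(2,w_i)\le 2^{1.4t+1}$ is obtained by first proving $k(3,w)\le 2^{1.2w+0.9}$ and then rerunning the induction of Lemma~\ref{lem:bound1.65}. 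Your route trades a little sharpness ($c=1.65$ instead of $c=1$, hence $C_0\approx 3.6$ instead of the paper's $2.95$) for less work.

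There is, however, a real slip in your lower bound for $k(D')$. You claim $k(P_{2^i}')\ge 2^{2(\log_2 35-1)\,2^{i-2}+2-i}$ and hence $k(D')\ge 2^{2\alpha w+f'(\mathbf a)}$. This is too strong by a factor of~$2$ in the exponent: already for $i=2$ it would give $k(P_4')\ge 2^{2(\log_2 35-1)}\approx 306$, while $|P_4|=|\operatorname{SD}_{16}|^2\cdot 2=512$, so $|P_4'|\le 256$ and in particular $k(P_4')\le 256$. The displayed recursion in Lemma~\ref{lem:Sylowb} should read $k(P_{2^i}')\ge k(P_{2^{i-1}})^2/2^i$ (it is $P_{2^i}'$, not $P_{2^i}$, that sits inside $P_{2^{i-1}}^2$), and feeding in your bound for $k(P_{2^{i-1}})$ gives only $k(P_{2^i}')\ge 2^{(\log_2 35-1)\,2^{i-2}+2-i}$, i.e.\ $k(D')\ge 2^{\alpha w+f'(\mathbf a)}$ with the \emph{same} leading coefficient $\alpha\approx 1.03$ as for $k(D)$.

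With this correction your reduction of (C1) becomes $(\alpha-1)w+3\sum_{i\ge 2}a_i\ge C_0+(\text{lower order})$, and since $\alpha-1\approx 0.03$ while $C_0\approx 3.6$, the term $3\sum_{i\ge 2}a_i$ alone (which equals $3$ when $w$ is a power of~$2$) no longer suffices; you need $w\gtrsim 20$ before the linear term closes the gap. The argument still goes through, but the list of small cases to be checked directly is noticeably longer than you suggest. The paper avoids this by obtaining the sharper constant $c=1$, which brings $C_0$ below~$3$ and makes $3\sum_{i\ge 2}a_i\ge 3$ enough for all $w\ge 4$; only $w\le 3$ then need explicit verification.
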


\begin{proof}
It holds that $k(3,w) \leq 2^{1.2 w + 0.9}$ (for $w \geq 8$, this follows from Lemma \ref{lem:bound3,4} and the remaining cases can be checked directly). With this, we can prove analogously to Lemma \ref{lem:bound1.65} that 
$$\sum_{ \textbf{w} \in W_w} k(3,w) \cdot \prod_{i \geq 1} k(2, w_i) \leq 2^{1.4 w + 1}.$$ With this, Lemma \ref{lem:recb} yields 
	$$k(B) \leq 2^{1.4 \frac{w-a_0}{2} + a_0 + 1.35} \cdot \sum_{j = 0}^{(w-a_0)/2} 2^{0.6 j} \leq 2^{0.7 w + 0.3 a_0 +1.35} \cdot \frac{2^{0.6 \left(\frac{w-a_0}{2}+1 \right)}-1}{2^{0.6}-1} \leq 2^{w + 2.95}.$$
	
	The number of conjugacy classes of the defect group $D$ is bounded by
	\begin{equation}\label{eq:toffi2}
	k(D) \geq \prod_{i \geq 0} k(P_{2^i})^{a_i} \geq 2^{a_0} \cdot 7^{a_1} \cdot \prod_{i \geq 2} \left(2^{2^i +1}\right)^{a_i} \geq 2^{w + 0.8 a_1 + \sum_{i \geq 2} a_i}
	\end{equation}
	and
	\begin{equation}\label{eq:toffi3}
	k(D') \geq \prod_{i \geq 0} k(P_{2^i}')^{a_i} \geq 2^{2a_1} \cdot 2^{4.45a_2} \cdot \prod_{i \geq 3} \left(2^{2^i -i+2}\right)^{a_i} \geq 2^{w -a_0+0.45a_2 - \sum_{i \geq 3} (i-2) a_i}.
	\end{equation}

	With $l(B) \geq \pi(w) \geq 2^{1.95}$ and $\sum_{i \geq 2} a_i \geq 1$ for $w \geq 4$, (C2) holds. For $w \geq 4$, we obtain for the first inequality 
	\begin{equation}\label{eq:toffi}
	k(B) \leq 2^{w + 0.95} \leq 2^{w + 2 a_1 + 0.45 a_2 + 3 \sum_{i \geq 2} a_i } \leq k_0(B) \cdot k(D').
	\end{equation}
Using the Equations \eqref{eq:toffi2} and \eqref{eq:toffi}, we obtain the following table
	\begin{center}
		\begin{tabular}{  l | l | l | l }
			$w$ & $k^w(B)$ & lower bound for $k_0(B) \cdot k(D')$ & lower bound for $k(D) \cdot l(B) $\\ \hline
			$1$ & $2$ & $2$& $2$ \\
			$2$ & $8$ & $2^4$ & $2^{3.8}$ \\ 
			$3$ & $16$ & $2^5$ & $3 \cdot 2^{3.8}$\\ 
			\end{tabular}
	\end{center}
so the inequalities hold.	
\end{proof}
This completes the proof of (C1) and (C2) for the general linear and unitary groups. 

\section{Special linear and unitary groups}
In the following, assume $\ell = 3$. We prove the conjecture for the special linear and unitary groups $\Sleps$, proceeding similarly to the proof of \cite[Thm.~5.16]{MAL18}. Observe that the proof given therein for the case that $\ell$ does not divide $q-\varepsilon$ is also valid for $\ell = 3$. Therefore, it remains to consider the case $3 |(q-\varepsilon)$. There, $\Gleps$ has a single unipotent block $\tilde{B}$ (cf. \cite[Thm. 7.A]{FON82}) with defect group $\tilde{D}$ which covers the unique unipotent block $B$ of $\Sleps$ (cf. \cite[Thm.]{CAB94}). Let $3^a$ be the exact power of $3$ dividing $q-\varepsilon$ and $3^m := |Z(G)|_3 = \gcd(w, q-\varepsilon)_3 = \min\{\log_3(w_3), a\}.$ 
\bigskip

For $Z \leq Z(G)$, let $B_Z$ be the principal block of $G/Z$ with defect group $D_Z$ and as a special case, let $\bar{B} = B_{Z(G)}$ be the principal block of $\operatorname{PSL}_n(\varepsilon q)$. The bounds given in the proof of \cite[Thm.~5.16]{MAL18} are also valid for $\ell = 3$: It holds that $k(D) \geq k(\tilde{D})/3^a$. For all $Z \leq Z(G)$, we obtain $k(D_Z) \geq k(\bar{D}) \geq k(D)/3^m$ (similarly for the derived subgroups). Moreover, it holds that $k_0(\bar{B}) = k_0(B) \geq k_0(\tilde{B})/3^a$ and $l(\tilde{B}) \geq l(B) = l(B_Z)$ as well as $k(B_Z) \leq k(B).$ In order to prove (C1) and $(C2)$ for the block $B_Z$ for $Z \leq Z(G)$, it is therefore sufficient to prove the following inequalities: 
\begin{equation}
k(B) \leq k_0(B) \cdot k(\bar{D}') \tag{C1'}
\end{equation}
and 
\begin{equation}
k(B) \leq l(B) \cdot k(\bar{D}) \tag{C2'}.
\end{equation}

If $w$ is not divisible by $3$, then $m = 0$. By \cite[Thm.~5.1]{MAL17}, it holds that $k(B) = k(\tilde{B})/3^a$, so it follows from the proven inequalities for the block  $\tilde{B}$ that (C1) and (C2) hold for $B$. We therefore assume that 3 divides $w$.

\begin{lemma}\label{lem:abschaetzungspeciallingroup} 
	Let $a \geq 2$, $w \geq 6$ a positive integer divisible by 3 and $1 \leq j \leq \min\{a, \log_3 w_3\}$. Then it holds that 
	\begin{equation}\label{eq:baum}
	3j + \frac{aw}{3^j} \leq
	\left(a - \frac{5}{6}\right) w. 
	\end{equation}
\end{lemma}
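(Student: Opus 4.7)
The plan is to regard the left-hand side as a function of a real variable $j$ and exploit convexity. Let $f(j) := 3j + aw \cdot 3^{-j}$ on $j \in [1, j_{\max}]$ with $j_{\max} := \min\{a, \log_3 w_3\}$. The second derivative $f''(j) = aw(\ln 3)^2 \cdot 3^{-j}$ is strictly positive, so $f$ is strictly convex and attains its maximum on the closed interval $[1, j_{\max}]$ at one of the two endpoints. It therefore suffices to verify \eqref{eq:baum} at $j = 1$ and at $j = j_{\max}$.

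At $j = 1$, after rearrangement the inequality reads $18 \leq w(4a - 5)$. Since $a \geq 2$ gives $4a - 5 \geq 3$, and $w \geq 6$ by hypothesis, this holds; note that the case $a = 2$, $w = 6$ is tight, which is precisely why the bound $w \geq 6$ is needed in the statement.

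At $j = j_{\max}$, the hypothesis $j_{\max} \leq \log_3 w_3$ yields $3^{j_{\max}} \mid w$, so we may write $w = 3^{j_{\max}} w'$ with $w' \in \mathbb{Z}_{>0}$. Multiplying \eqref{eq:baum} through by $3^{j_{\max}}$ and rearranging transforms it into
\[
3 j_{\max} \leq w' \bigl( (a - 5/6) \cdot 3^{j_{\max}} - a \bigr).
\]
The function $\phi(j, a) := (a - 5/6) \cdot 3^j - a$ is strictly positive and monotonically increasing in both $j \geq 1$ and $a \geq 2$, with minimum value $\phi(1, 2) = 3/2$. In the extreme case $j_{\max} = 1$, $a = 2$ the inequality reduces to $w' \geq 2$, which follows from $w \geq 6$ combined with $w_3 = 3^{j_{\max}} = 3$. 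For every other admissible pair $(j_{\max}, a)$, the growth of $\phi$ already makes the inequality trivial from $w' \geq 1$.

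The main obstacle is only bookkeeping: the inequality is essentially sharp at $(j, a, w) = (1, 2, 6)$, leaving no slack, and the convexity reduction is what avoids an ad hoc case analysis over all intermediate values of $j$. Once the reduction to endpoints is in place, both remaining verifications are routine rearrangements.
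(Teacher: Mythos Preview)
Your proof is correct and takes a genuinely different route from the paper. The paper argues by direct case analysis on $j$: it checks $j=1$ (valid for $w\geq 6$), $j=2$ (valid for $w\geq 9$, noting $j=2$ is inadmissible at $w=6$), and then for $j\geq 3$ bounds the left side by $3a+\tfrac{aw}{27}$ using $j\leq a$ and $3^j\geq 27$, reducing to a linear inequality in $w$. Your convexity argument is more conceptual: since $f(j)=3j+aw\cdot 3^{-j}$ is strictly convex, only the two endpoints $j=1$ and $j=j_{\max}$ need to be examined, and the substitution $w=3^{j_{\max}}w'$ at the upper endpoint is a clean way to exploit $j_{\max}\le\log_3 w_3$. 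Both approaches hinge on the same tight case $(j,a,w)=(1,2,6)$; yours avoids the intermediate $j=2$ check entirely.

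One small point of rigour: your closing assertion that ``for every other admissible pair $(j_{\max},a)$, the growth of $\phi$ already makes the inequality trivial from $w'\geq 1$'' is true but not fully justified by the stated monotonicity of $\phi$, since the left side $3j_{\max}$ also grows with $j_{\max}$. A one-line supplement suffices: for $j_{\max}=1$, $a\geq 3$ the inequality reads $3\leq 2a-\tfrac{5}{2}$; for $j_{\max}=2$ it reads $6\leq 8a-\tfrac{15}{2}$, which holds for $a\geq 2$; and for $j_{\max}\geq 3$ one has $3j_{\max}\leq 3a\leq 26a-\tfrac{45}{2}\leq \phi(j_{\max},a)$ using $j_{\max}\leq a$ and $3^{j_{\max}}\geq 27$.
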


\begin{proof}
	The inequality holds for $w \geq 6$ if $j = 1$ and for $w \geq 9$ if $j = 2$ (note that for $w = 6$, only $j = 1$ is admissible). So assume $j \geq 3$. Using $j \leq a$, the left hand side can be bounded from above by $3a + \frac{aw}{27}$. The resulting inequality
	$$3a  + \frac{aw}{27} \leq \left(a-\frac{5}{6}\right) w$$
	
	is fulfilled for $w \geq \frac{162 a}{52a - 45},$
	i.e., for $w \geq 6$ if $a \geq 2$. 
\end{proof}
The proof of the following lemma is analogous to \cite[Thm.~5.16]{MAL18}.

\begin{lemma}\label{lem:kBsl}
Let $a \geq 2$ and assume that $w \geq 6$ is divisible by 3. It holds that
$$k(B) \leq p_3(w) \cdot 3^{a(w-1) - \frac{5}{6}w + \log_3(19/18) +2} \leq 3^{a(w-1) - \frac{2}{3}w + \log_3(19/18)+2}.$$ 
\end{lemma}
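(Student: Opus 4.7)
The plan is to follow the strategy of \cite[Thm.~5.16]{MAL18}, adapted to the situation $3 \mid (q-\varepsilon)$ at hand. The bridge between $\tilde B$ and $B$ is Clifford theory with respect to the abelian quotient $\Gleps/\Sleps \cong C_{q-\varepsilon}$: if $\mathcal{O}_1,\ldots,\mathcal{O}_r$ denote the $\Gleps$-orbits on $\Irr(\Sleps) \cap B$ with sizes $t_j = |\mathcal{O}_j|$, then $k(B) = r$ and $k(\tilde B) = \sum_{j=1}^r t_j$. Hence a bound on $k(B)$ requires both an upper bound on $k(\tilde B)$ and a lower bound on the typical orbit size.

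The first step is a uniform lower bound on the orbit sizes. The subgroup of $Z(\Gleps)$ that acts trivially on the unipotent characters of $\tilde B$ has $3$-part at most $3^m$, so every $t_j$ is divisible by $3^{a-m}$. This yields the crude estimate $k(B) \leq 3^{m-a}\cdot k(\tilde B)$, which combined with Lemma \ref{lem:bound23} (applied with $d = 1$) already produces a bound of the desired shape up to a multiplicative constant.

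The second step is to pin down this constant as $19/18$. For this, one uses the parametrization of unipotent characters of $\tilde B$ by $3$-decompositions $\mathbf{w} \in W_w$ together with the associated multipartitions (Lemma \ref{lem:kBformula}), and isolates those multipartitions fixed by a non-trivial element of $Z(\Gleps)$. Such characters correspond to data exhibiting a $3^j$-fold cyclic symmetry for some $1 \leq j \leq \min\{a, \log_3 w_3\}$, and their number is controlled by an expression of order $3^{3j + aw/3^j}$. Lemma \ref{lem:abschaetzungspeciallingroup} is tailor-made to absorb this term into the main exponent $(a-\frac{5}{6})w$ for every admissible $j$, and a direct count in the extremal case where symmetry first occurs produces the sharp constant $\log_3(19/18)$.

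Putting these ingredients together gives
\[
k(B) \leq \tfrac{19}{18}\cdot 3^{-a}\cdot p_3(w)\cdot 3^{(a-\frac{5}{6})w+2} = p_3(w)\cdot 3^{\,a(w-1)-\frac{5}{6}w+\log_3(19/18)+2},
\]
which is the first claimed inequality. The second inequality is immediate from Lemma \ref{lem:plw}: for $w \geq 6$ one has $p_3(w) \leq 3^{w/6}$, and substituting this turns $-\frac{5}{6}w$ into $-\frac{2}{3}w$ in the exponent. The main obstacle is the second step: producing the precise constant $19/18$ requires careful bookkeeping of the symmetric multipartitions contributing to each $3$-decomposition $\mathbf{w}$, uniformly in $\mathbf{w} \in W_w$, and the hypotheses $a \geq 2$ and $w \geq 6$ enter exactly at the boundary where Lemma \ref{lem:abschaetzungspeciallingroup} remains valid.
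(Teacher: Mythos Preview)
Your Clifford bookkeeping in the first step is inverted. If $\mathcal{O}_1,\ldots,\mathcal{O}_r$ are the $\Gleps$-orbits on $\Irr(B)$ with sizes $t_j$, then $k(B)=\sum_j t_j$, not $r$; and by Clifford theory for the cyclic quotient, $k(\tilde B)=\sum_j |I_{\theta_j}/\Sleps|=\sum_j (q-\varepsilon)/t_j$, not $\sum_j t_j$. So the inequality $k(B)\le 3^{m-a}\,k(\tilde B)$ does not follow from the orbit-size divisibility you state, and even if it did, $3^m$ is not a constant (it grows with $w_3$), so this would not give ``the desired shape up to a multiplicative constant''.

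The paper does not attempt to re-derive the Clifford count at all; it quotes the explicit bound from \cite[Thm.~5.1]{MAL17}:
\[
k(B) \;\le\; \Bigl(k(3,3^a,1,w) + \sum_{j=1}^{m} p_3(w/3^j)\cdot 3^{\,2j + aw/3^j}\Bigr)\big/3^a.
\]
The main term is bounded by $p_3(w)\cdot 3^{(a-\frac56)w+2}$ via Lemma~\ref{lem:bound23}, and each correction term by $p_3(w)\cdot 3^{(a-\frac56)w - j}$ using Lemma~\ref{lem:abschaetzungspeciallingroup} (this is where $3j+aw/3^j\le(a-\tfrac56)w$ enters, turning the $2j$ in the exponent into a residual $-j$). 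Summing gives the factor
\[
1+\sum_{j\ge 1}3^{-2-j}=1+\tfrac{1}{18}=\tfrac{19}{18},
\]
so the constant $\log_3(19/18)$ comes from this geometric series, not from a ``direct count in the extremal case''. The second inequality is then exactly your last step, via $p_3(w)\le 3^{w/6}$ from Lemma~\ref{lem:plw}. You identified the right three lemmas, but the argument stands or falls on the formula from \cite[Thm.~5.1]{MAL17}, which you neither cite nor correctly reproduce.
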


\begin{proof}
	As in the proof of \cite[Thm.~5.16]{MAL18}, we obtain $k(3,3^a,1,x) \leq p_3(x) \cdot 3^{ax}$ for all $x \geq 0$. By \cite[Thm.~5.1]{MAL17}, the number of characters in the block $B$ can thus be bounded by 
	\begin{equation}\label{eq:kBfuerspeciallineargroup}
	k(B) \leq \left(k(3,3^a,1,w) + \sum_{j=1}^{m} p_3\left(\frac{w}{3^j}\right) \cdot 3^{2j + \frac{aw}{3^j}}\right)/3^a,
	\end{equation}
	
	where $3^m = \min\{w_3, 3^a\}$ as before. With Lemma \ref{lem:abschaetzungspeciallingroup} and the bound from Lemma \ref{lem:bound23} we obtain
	\begin{alignat*}{2}
	k(B) &\leq p_3(w) \cdot \left(3^{\left(a-\frac{5}{6}\right)w + 2} + \sum_{j=1}^{m} 3^{2j + \frac{aw}{3^j}}\right)/3^a
	&&\leq p_3(w) \cdot 3^{a(w-1) - \frac{5}{6}w +2} \left(1 + \sum_{j \geq 1} 3^{-2-j}\right) \\
	&\leq p_3(w) \cdot 3^{a(w-1) - \frac{5}{6}w +2} \left(1 + \frac{1}{18}\right) 
	&&= p_3(w) \cdot 3^{a(w-1) - \frac{5}{6}w  +2+ \log_3(19/18)}. 
	\end{alignat*} 
The second bound follows from that with Lemma \ref{lem:plw}.
\end{proof}
	
\begin{lemma}
	The inequalities (C1') and (C2') hold if $a \geq 2$ and $w$ is divisible by 3. 
\end{lemma}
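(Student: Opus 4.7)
The plan is to bound $k(B)$ from above via Lemma \ref{lem:kBsl} and to bound the right-hand sides of (C1') and (C2') from below using the quotient relations recorded before Lemma \ref{lem:abschaetzungspeciallingroup}, namely $k_0(B) \geq k_0(\tilde{B})/3^a$, $k(\bar{D}) \geq k(\tilde{D})/3^{a+m}$, and $k(\bar{D}') \geq k(\tilde{D}')/3^{a+m}$, together with the explicit formulas \eqref{eq:kD'3}, \eqref{eq:kD3} and the expression for $k_0(\tilde{B})$ from the $\Gleps$ case. Two simplifications will apply throughout: since $3 \mid (q-\varepsilon)$ we have $d=1$ so that $\log_3 d = 0$ in the $k_0$ formula, and since $3 \mid w$ the bottom coefficient $a_0$ of the $3$-adic expansion vanishes, so all sums $\sum a_i$ run from $i = 1$. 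A suitable lower bound for $l(B)$ (a Brauer-character count for the unipotent block as in the proof of \cite[Thm.~5.16]{MAL18}, giving $l(B) \geq \pi(w)$ since $d=1$) will supply the remaining ingredient for (C2').

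For (C1'), substituting these bounds into $k(B) \leq k_0(B) \cdot k(\bar{D}')$ and cancelling the $aw$ and $\sum a_i(a + i)$ contributions in the exponents reduces the claim to an inequality of the form
$$\tfrac{1}{6}w + \tfrac{3}{2}\sum_{i \geq 1} a_i \;\geq\; a + m + 2 + \log_3(19/18).$$
Because $m \leq a$ and $\sum_{i\geq 1} a_i \geq 1$ whenever $w \geq 3$, the linear term $\tfrac{w}{6}$ dominates once $w$ is large enough. The analogous manipulation for (C2') yields
$$\tfrac{1}{6}w + \log_3 \pi(w) + \tfrac{1}{2}\sum_{i \geq 1} a_i \;\geq\; m + 2 + \log_3(19/18),$$
which holds much more comfortably, since $\pi(w)$ grows super-polynomially and $m \leq a$ is bounded in terms of the $3$-adic structure of $w$.

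The main obstacle is the borderline behaviour for small $w$, concretely $w \in \{6, 9\}$ paired with $a = 2$, where the slack $3^{\log_3(19/18) + 2}$ inherited from Lemma \ref{lem:kBsl} together with the loss of $3^{a+m}$ in passing from $\tilde{D}$ to $\bar{D}$ is not yet dominated by $\tfrac{w}{6} + \tfrac{3}{2}\sum a_i$ in the (C1') reduction. For these finitely many exceptional $w$, the plan is to compute $k(B)$ directly via \eqref{eq:kBfuerspeciallineargroup} and Lemma \ref{lem:kBformula}, evaluate $k_0(\tilde{B})$, $k(\tilde{D})$, $k(\tilde{D}')$ from their closed-form expressions, and verify both inequalities numerically, exactly as was done for the small-$w$ cases in the preceding $\Gleps$ arguments.
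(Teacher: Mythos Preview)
Your reduction for (C2') is fine and matches the paper's approach. The problem is (C1'). The bound $k(\bar{D}') \geq k(\tilde{D}')/3^{a+m}$ that you take from the preamble is too weak: after cancellation you arrive at
\[
\tfrac{1}{6}w + \tfrac{3}{2}\sum_{i \geq 1} a_i \;\geq\; a + m + 2 + \log_3(19/18),
\]
and the right-hand side still contains $a$, which is \emph{not} bounded in terms of $w$. For instance with $w = 3^k$ (so $\sum a_i = 1$) and any $a > k$ (so $m = k$), the inequality reads $3^k/6 + 3/2 \geq a + k + 2.05$, which fails for every sufficiently large $a$. So this is not a ``finitely many small $w$'' issue that a computer check can close; it fails on an infinite set of pairs $(a,w)$.

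The paper avoids this by invoking \cite[Prop.~5.15]{MAL18}, which gives the sharper relation $k(\bar{D}') \geq k(\tilde{D}')/3^{m+\delta}$ with $\delta \in \{0,1\}$ (equal to $1$ exactly when $w$ is a $3$-power). With that bound the $a$ disappears from the reduced inequality, leaving $\tfrac{w}{6} + \tfrac{3}{2}\sum a_i \geq m + \delta + 2 + \log_3(19/18)$, and now $m \leq \log_3 w$ lets $\tfrac{w}{6}$ dominate. You need either this sharper input or some other way to kill the stray $a$. Also note that Lemma~\ref{lem:kBsl} only applies for $w \geq 6$, so the case $w = 3$ (for all $a \geq 2$) must be handled separately; your list of exceptional cases $\{6,9\}$ omits it.
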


\begin{proof}	
We first assume $w \geq 6$ and consider (C2'). With the estimates from the beginning of this chapter and Equation \eqref{eq:kD3} it holds that
\begin{equation}\label{eq:kDbar}
\kD \geq \frac{k(\tilde{D})}{3^{a+m}} \geq 3^{a(w-1) - \frac{w}{2} - m  + \frac{1}{2} \sum_{i \geq 1} a_i}.
\end{equation}
	
%
%
%
We obtain
\begin{equation}\label{eq:AbschaetzungC2final69}
\frac{\kD \cdot l(B)}{k(B)} \geq \frac{\pi(w) \cdot 3^{a(w-1) - \frac{w}{2} - m + \frac{1}{2} \sum_{i \geq 1} a_i}}{p_3(w) \cdot 3^{a (w-1) - \frac{5}{6}w + \log_3(19/18) +2}} = \frac{\pi(w)}{p_3(w)} \cdot 3^{\frac{w}{3} - m - \log_3(19/18) + \frac{1}{2} \sum_{i \geq 1} a_i -2}. 
\end{equation}
Using $m \leq \log_3(w)$ and $\sum a_i \geq 1$, the above quotient is greater than 1 for $w \geq 12$. This also holds for $w \in \{6,9\}$ when inserting the exact values of $w$, $m$ and $\sum_{i \geq 1} a_i$ and using that $\pi(w)/p_3(w) \geq 3$ in this case. 
\bigskip

Now we consider the inequality (C1'). 
By the estimates from the beginning of this section, Equation \ref{eq:kD'3} and \cite[Prop. 5.15]{MAL18} we have
\begin{equation}\label{eq:kDbar'}
k(\bar{D}') \geq \frac{k(\tilde{D}')}{3^{m + \delta}} \geq 3^{\left(a-\frac{1}{2}\right)w - \sum_{i \geq 1} a_i\left(a+i-\frac{5}{2}\right)-m - \delta},
\end{equation}
where $\delta = 1$ if $w$ is a power of 3 and $\delta = 0$ otherwise. Moreover, we have 
$$k_0(B) \geq \frac{k_0(\tilde{B})}{3^a} \geq 3^{\sum_{i \geq 1} a_i (a+i-1) - a}.$$
	
With Lemma \ref{lem:kBsl} we obtain for $w \notin \{3,9\}$
$$\frac{k_0(B) \cdot k(\bar{D}')}{k(B)} \geq \frac{3^{\left(a-\frac{1}{2}\right)w + \frac{3}{2} \sum_{i \geq 1} a_i -m-a- \delta}}{3^{a(w-1) - \frac{2}{3} w  + \log_3(19/18)+2}} \geq 3^{\frac{w}{6}+ \frac{3}{2} \sum_{i \geq 1} a_i - \log_3(19/18) - m - 2- \delta} > 1.$$
	
The last inequality holds for $w \geq 15$ not a power of 3 since then $m \leq \log_3(w/2) \leq \frac{w}{6}-1/2 - \log_3(19/18)$ and $\sum a_i \geq 1$. For $w \in \{6,12\}$, we have $m = 1$ and $\sum a_i = 2$, so the above term is greater than 1. 
For $w \geq 27$ a power of 3, the inequality holds with $m \leq \log_3(w)$.
\bigskip

Let $w = 9$. For $a \geq 3$, we can use the stronger bound in Lemma \ref{lem:multipartitions} in the proof of Lemma \ref{lem:bound23} to obtain an improved bound $k(3,3^a,1,9) \leq p_3(9) \cdot 3^{9 \cdot \left(a-\frac{5}{6}\right)} \leq 3^{9a-6}.$ With this, the number of characters in $B$ can be bounded by 
$$k(B) \leq \frac{3^{9a -6} +2 \cdot 3^{2+3a} + 3^{4+a} }{3^a} \leq 2 \cdot 3^{8a-6}.$$
	
By \cite[Example 5.14]{MAL17} we have $k_0(B) = 18$ and Equation \eqref{eq:kDbar'} yields $k(\bar{D}') \geq 3^{8a-7}$, so the inequality holds.  
For $a = 2$ and $w  =9$, it holds that $k(B) \leq 45687$. By the above calculation, we have $k(\bar{D}') \geq 3^9$ and $k_0(B) = 18$, so (C1') holds. 
\bigskip
	
It remains to consider both inequalities for $w = 3$. In this case, we have $m = 1$ and  Equations \eqref{eq:kDbar'} and \eqref{eq:kDbar} yield $k(\bar{D}'),\, \kD \geq 3^{2a-2}$. Furthermore, by \cite[Example 5.14]{MAL18}, it holds that $k_0(B) = 6$ and $l(B) \geq 5$. By Lemma \ref{lem:k3aw2}, we have 
$$k(3^a,3)= \frac{1}{6} \cdot 3^{3a} + \frac{3}{2} \cdot 3^{2a} + \frac{4}{3} \cdot 3^a \leq 0.35 \cdot 3^{3a}, $$

hence
\begin{equation*}
k(B) = \frac{k(3^a,3) + 3^{2+a} - 3^{a-1}}{3^a} 
\leq 0.35 \cdot 3^{2a} +9 - \frac{1}{3} \\
\leq 5 \cdot 3^{2a-2},
\end{equation*}

where we used the assumption $a \geq 2$ in the third step.
This yields
$$k(B) \leq 0.5 \cdot 3^{2a} \leq 5 \cdot 3^{2a-2} \leq \min\{k_0(B) \cdot k(\bar{D}'),\, l(B) \cdot k(D)\},$$
so both inequalities hold also in this case. 
\end{proof}

\begin{lemma}\label{lem:SlC2a1w3}
	The inequalities (C1') and (C2') hold if $a = 1$ and $w \geq 6$ is divisible by 3.  
\end{lemma}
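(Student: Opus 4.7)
The plan is to mimic Lemma~\ref{lem:kBsl}, substituting the $a=1$ bound of Lemma~\ref{lem:bound1.65} for the $a\ge 2$ bound of Lemma~\ref{lem:bound23} and using the sharper $a=1$ lower bounds already collected in Equations \eqref{eq:kD'31}, \eqref{eq:kD31} and \eqref{eq:betterboundk0B}. Since $a = 1$ and $3 \mid w$, the parameter $m$ equals $1$, so \cite[Thm.~5.1]{MAL17} reduces to
\[
k(B) \;\leq\; \frac{k(\tilde B) + p_3(w/3)\cdot 3^{2+w/3}}{3}.
\]
Plugging in $k(\tilde B) \leq 3^{(w+7)/2}$ from Lemma~\ref{lem:bound1.65} and $p_3(w/3)\leq 3^{w/18}$ from Lemma~\ref{lem:plw} (valid for $w\ne 9$), the first summand dominates and yields $k(B)\leq c\cdot 3^{(w+5)/2}$ with a constant $c$ only slightly larger than $1/3$ once $w\ge 12$.

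Combining this with the lower bounds $k_0(B)\ge k_0(\tilde B)/3$, $k(\bar D')\ge k(\tilde D')/3^{m+\delta}$, $k(\bar D)\ge k(\tilde D)/3^{a+m}$ and $l(B)\ge \pi(w)$ reduces (C1') and (C2') to exponent comparisons. The resulting gap for (C1') has the form $(w-27)/6 + \tfrac12\sum_{i\ge 1} a_i + \#\{i:a_i\ne 0\}-\delta-\log_3 c$, which one checks to be positive for every $w\ge 12$ (the only borderline case being $w=27$, where the summand $\#\{i:a_i\ne 0\}=1$ just compensates $\delta = 1$). The gap for (C2') is easier, being of the form $(w-15)/6+\log_3\pi(w)-2$, and remains positive throughout $w\ge 12$. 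The case $w=6$ is a direct numerical verification: inserting $k^6(\tilde B)=270$ and $k_0(\tilde B)=k(9,2)=54$ from Lemma~\ref{lem:k3aw2} gives $k(B)\leq 144$, which is comfortably beaten by $k_0(B)\cdot k(\bar D')\ge 18\cdot 27=486$ and $l(B)\cdot k(\bar D)\ge 11\cdot 81$.

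The principal obstacle is $w=9$. The generic recipe gives $k(B)\le 843$, while the off-the-shelf lower bounds furnish only $k_0(B)\cdot k(\bar D')\gtrsim 9\cdot 3^{1+\sqrt 3}\approx 176$, insufficient for (C1'). Here the plan is to follow the strategy used for the $a\ge 2,\,w=9$ case in Lemma~\ref{lem:kBsl}: invoke the improved bound on $k(D_{2,3}')$ from Lemma~\ref{lem:nrcharacters} together with an explicit value of $k_0(B)$, obtainable from \cite[Example 5.14]{MAL17} or by a direct calculation inside the Sylow $3$-subgroup of $\operatorname{SL}_9(\varepsilon q)$, to close the gap. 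Inequality (C2') for $w=9$, by contrast, is immediate: $l(B)\ge \pi(9)=30$ and $k(\bar D)\ge 3^{3+\sqrt 3}$ already yield $l(B)\cdot k(\bar D)\gg 843$. The corresponding statement for $\operatorname{SU}$ then follows from the linear case by the same base change $q\leftrightarrow q_0$ argument used in Lemma~\ref{lem:kBsl}.
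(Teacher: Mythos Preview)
Your overall strategy matches the paper's: bound $k(B)$ via Lemma~\ref{lem:bound1.65} and the $j=1$ term from \cite[Thm.~5.1]{MAL17}, then combine with the $a=1$ lower bounds \eqref{eq:kD'31}, \eqref{eq:kD31}, \eqref{eq:betterboundk0B}. The exponent bookkeeping for $w\ge 12$ and the direct check at $w=6$ are fine (modulo a harmless slip: for $w=6$ one has $k(\bar D)\ge k(D_{1,3})^2/9 = 289/9\approx 32$, not $81$; the inequality still holds).

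The genuine gap is your treatment of (C1') at $w=9$. Lemma~\ref{lem:nrcharacters} gives only
\[
k(D_{2,3}')\ \ge\ 3^{3+\sqrt 3}\ \approx\ 180,
\]
hence $k(\bar D')\ge k(\tilde D')/3^{m+\delta}=k(D_{2,3}')/9\ge 20$. With the explicit $k_0(B)=18$ this yields $k_0(B)\cdot k(\bar D')\ge 360$, which does \emph{not} dominate your bound $k(B)\le 843$ (nor the paper's $745$). So invoking Lemma~\ref{lem:nrcharacters} together with an explicit $k_0(B)$ is not enough to close the gap, contrary to what you assert.

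What the paper does instead is a sharper direct estimate on $k(D_{2,3}')$: since $D_{2,3}'\trianglelefteq D_{1,3}^3$ with index $|D_{1,3}/D_{1,3}'|=9$, the inequality $k(G)\le [G:N]\cdot k(N)$ gives
\[
k(D_{2,3}')\ \ge\ \frac{k(D_{1,3})^3}{9}\ =\ \frac{17^3}{9}\ \approx\ 546,
\]
so $k(\bar D')\ge 17^3/81\approx 60.7$ and $k_0(B)\cdot k(\bar D')\ge 18\cdot 60.7\approx 1092>843$. This index computation (not the generic bound of Lemma~\ref{lem:nrcharacters}) is the missing ingredient in your plan for $w=9$.

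A minor remark: the final sentence about a ``base change $q\leftrightarrow q_0$'' for $\operatorname{SU}$ is unnecessary here. The argument is written uniformly for $\operatorname{SL}_n(\varepsilon q)$ with $3\mid(q-\varepsilon)$, so the unitary case is already covered.
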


\begin{proof}
It holds that $m \leq a = 1$ in this case. First consider (C2'). With the improved bound from Equation \eqref{eq:kD31}, we have
\begin{equation}\label{eq:kDa1wleq10}
\kD \geq \frac{k(\tilde{D})}{9} \geq 3^{\frac{2}{3} w  + \frac{1}{2}\sum_{i \geq 1} a_i -2}.
\end{equation}

By Lemmas \ref{lem:bound1.65} and \ref{lem:plw}, it holds

\begin{equation}\label{eq:kbsla1}
k(B) \leq \frac{k(3,3,1,w) + p_3(w/3) \cdot 3^{2 + \frac{w}{3}}}{3}
\leq 3^{\frac{w}{2} + \frac{5}{2}} + 3^{1 + \frac{w}{2}} \leq 3^{\frac{w}{2}+ 2.67}.
\end{equation}

For $w \geq 15$, the number of irreducible Brauer characters in $B$ can be bounded by $l(B) \geq \pi(w) \geq 3^{4.67}$, so (C2') holds. Using the exact values in Equation \eqref{eq:kDa1wleq10}, the claim also holds for $w \in \{9,12\}.$ 
For $w = 6$, we have $\lB \geq k(1,6) = 11$, $\kD \geq 3^{3}$ as well as $k(B) \leq \left(k(3,3,1,6)  +3^4\right) /3 = 117$.
\bigskip

Now consider (C1'). Let $\delta = 1$ if $w$ is a power of 3 and zero otherwise as before. Equation \eqref{eq:kD'31} yields
\begin{equation}\label{eq:kD'}
k(\bar{D}') \geq \frac{k(\tilde{D}')}{3^{1+\delta}} \geq 3^{\frac{2}{3} w+ \sum_{i \geq 1} a_i \left(\frac{1}{2}-i\right)-1 - \delta}.
\end{equation}

With the improved bound from Lemma \ref{eq:betterboundk0B} we obtain 
$$\frac{k(\bar{D}') \cdot k_0(B)}{k(B)} \geq \frac{3^{\frac{2}{3} w+ \sum_{i \geq 1}  a_i \left(\frac{1}{2}-i\right) -1 - \delta + \sum_{i \geq 1} a_i \cdot i + \sum_{i: a_i \neq 0} 1 -1}}{3^{2.67 + \frac{w}{2}}} \\
= 3^{\frac{w}{6} - 4.67 + \frac{1}{2}\sum_{i \geq 1} a_i + \sum_{i: a_i \neq 0} 1 - \delta}.$$
	
If $w$ is not a power of 3, we either have $\sum_{i \geq 1} a_i \geq 2$ or $\sum_{i: a_i \neq 0} 1 \geq 2$. With this, the inequality holds for $w \geq 18$. Using the exact values of the $a_i$, the claim follows for $w \in \{12,15\}$. For $w =6$, we have $k_0(B) \geq k_0(\tilde{B})/3 =  k(3^3,2)/3= 135$ by Equation \eqref{eq:betterboundk0B} and $k(B) \leq 117.$ For $w \geq 27$ a power of $3$, the above term is greater than 1. For $w = 9$, we have $k(B) \leq 745$ by Equation \ref{eq:kBfuerspeciallineargroup}. It holds that $|D_{1,3}^3 : D_{2,3}'| = 9$ (cf. \cite[Lemma 5.10]{MAL18}) and we can check directly that $k(D_{1,3}) = 17$, so we have 
$$k(\bar{D}') \geq \frac{k(\tilde{D}')}{9} = \frac{k(D_{1,3})^3}{81} \geq 3^{3.5}.$$
With $k_0(B) = 18$ (cf. \cite[Thm.~5.12]{MAL17} the inequality holds in this case. 
\end{proof}
For the remaining case $a = 1$ and $w = 3$, we consider the original inequalities (C1) and (C2).

\begin{lemma}
	The inequalities (C1) and (C2) hold for the principal block of $H = G/Z$ if $a =1$ and $w = 3$. 
\end{lemma}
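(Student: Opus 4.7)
Since $w=3$ and $a=1$, the defect group of $B_Z$ has order at most $27$ and only finitely many possibilities arise. The strategy is to bound $k(B_Z)$ from above by an explicit small constant, identify the defect group $D_H$ of $B_Z$, and then verify (C1) and (C2) case by case. First I bound $k(B_Z)\leq k(B)$ by specializing Equation~\eqref{eq:kBfuerspeciallineargroup} to $w=3$, $a=1$, $m=1$ and computing $k(3,1,1,3)=24$ via Lemma~\ref{lem:kBformula} (the two 3-decompositions of $3$ contributing $k(3,3)=22$ and $k(3,0)\cdot k(2,1)=2$), which gives
$$k(B)\leq \frac{24+27}{3}=17.$$

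Next I identify the defect group. The Sylow 3-subgroup $\tilde D$ of $\Gleps$ is $D_{1,3}=C_3\wr C_3$ of order $81$ with $k(\tilde D)=17$; the corresponding Sylow 3-subgroup $D$ of $G=\Sleps$ is the extraspecial group $3^{1+2}$ of exponent $3$, realized as $\tilde D/Z(\tilde D)$, so that $k(D)=11$ and $k(D')=k(Z(D))=3$. Since $|Z(G)|_3=3$, the defect group $D_H$ of $B_Z$ is either isomorphic to $D$ (if $3\nmid|Z|$) or abelian of order $9$ (if $3\mid|Z|$, giving $k(D_H)=9$ and $k(D_H')=1$).

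In the abelian subcase, (C1) reduces to $k(B_Z)=k_0(B_Z)$, which holds by the Brauer height-zero theorem for blocks of abelian defect, and (C2), $k(B_Z)\leq 9\cdot l(B_Z)$, follows from $l(B_Z)\geq 2$. In the non-abelian subcase, (C2) reads $k(B_Z)\leq 11\cdot l(B_Z)$ and again reduces to $l(B_Z)\geq 2$, while (C1) reads $k(B_Z)\leq 3\cdot k_0(B_Z)$ and reduces to $k_0(B_Z)\geq 6$.

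The main obstacle is the lower bound $k_0(B_Z)\geq 6$ in the non-abelian defect subcase: the generic estimate $k_0(B_Z)=k_0(\bar B)\geq k_0(\tilde B)/3^a=9/3=3$ is too weak, so a finer analysis of how the height-zero characters of $\tilde B$ descend to $B_Z$ is needed. If this proves inconclusive, the remaining configurations form a finite family parametrized by $\varepsilon\in\{\pm 1\}$ and the residue of $q$ modulo $9$, and can be verified directly via GAP.
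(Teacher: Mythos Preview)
Your overall strategy matches the paper's: split according to whether the defect group of $B_Z$ is $D$ (order $27$) or abelian of order $9$, and in the latter case appeal to a general result for abelian defect (the paper cites \cite[Thm.~2.1]{MAL18}; your appeal to the height-zero theorem is equivalent for (C1)). However, the proposal is genuinely incomplete at the two places you yourself flag.

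First, you reduce (C2) in both subcases to $l(B_Z)\geq 2$ but never establish this. Second, and more seriously, you reduce (C1) in the non-abelian subcase to $k_0(B_Z)\geq 6$ and then concede that the generic bound $k_0(B_Z)\geq k_0(\tilde B)/3^a=3$ is insufficient, leaving the matter to an unspecified ``finer analysis'' or a GAP check over residues of $q$. As written, this is not a proof.

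The paper closes both gaps with no additional computation. The key observation (stated at the beginning of the section) is that $k_0(B_Z)=k_0(\bar B)=k_0(B)$ and $l(B_Z)=l(B)$ for every $Z\leq Z(G)$, so it suffices to treat $B$ itself. Then \cite[Example~5.14]{MAL18} provides the exact values $k(B)=16$, $k_0(B)=6$, and $l(B)=5$. With $|D'|=3$ (hence $k(D')=3$) one gets $k(B)=16\leq 18=k_0(B)\cdot k(D')$, and with $k(D)\geq k(\bar D)=9$ one gets $k(B)=16\leq 45=l(B)\cdot k(D)$. Your upper bound $k(B)\leq 17$ and the identification $k(D)=11$ are correct but unnecessary once these exact values are in hand; the missing ingredient in your argument is precisely the citation of \cite[Example~5.14]{MAL18} (or an equivalent direct computation) to obtain $k_0(B)=6$ and $l(B)=5$.
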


\begin{proof}
	We use the notation from the beginning of this chapter. We can check directly that $|\tilde{D}| = 81$, hence $|D| = 27$. Since $|Z(G)|_3 = 3$, a defect group $D_Z$ for $Z \leq Z(G)$ is either isomorphic to $D$ or $|D_Z| = 9$. In the latter case, $D_Z$ is abelian and the claim holds by \cite[Thm.~2.1]{MAL18}. For the first case, since $k_0(B) = k_0(B_Z)$ and $l(B) = l(B_Z)$, it suffices to prove the inequality for $\Slweps$. We have $|\tilde{D}'| = 9$ and hence $|D'| = 3$, thus $k(D') = 3$. By Example 5.14 in \cite{MAL18}, we have $k_0(B) = 6$ and $k(B) = 16$, so (C1) holds. For (C2), we use Example 5.14 in \cite{MAL18} to obtain $l(B) = 5$ and $k(D) \geq k(\bar{D}) = 9$, since $\bar{D}$  is abelian. With this, the inequality holds. 
\end{proof}

This completes the proof of our main theorem. 
\bigskip

\textbf{Acknowledgement:} I would like to thank Prof.\  G.\ Malle for supervising my master thesis as well as for his numerous suggestions and his advice concerning this project. 

\nocite{GAP4}
\nocite{BRE19}

\small
\bibliographystyle{plainurl}
\bibliography{Biblio.bib}

\begin{thebibliography}{10}

\bibitem{BRE19}
S.~Brenner.
\newblock {\em On the {M}alle-{N}avarro conjecture for blocks of general linear
  groups at $p = 3$}.
\newblock Master thesis, Technische Universit\"at Kaiserslautern, 2019.

\bibitem{BRO86}
M.~Broué.
\newblock Les $\ell$-blocs des groupes $\operatorname{GL}(n,q)$ et
  $\operatorname{GU}(n,q^2)$ et leurs structures locales.
\newblock {\em Séminaire Bourbaki}, 59:159--188, 1984-1985.

\bibitem{CAB94}
M.~Cabanes and M.~{E}nguehard.
\newblock On unipotent blocks and their ordinary characters.
\newblock {\em Invent. Math}, 117:149--164, 1994.

\bibitem{AZE09}
W.~de~{A}zevedo Pribitkin.
\newblock Simple upper bounds for partition functions.
\newblock {\em Ramanujan J}, 18:113--119, 2009.

\bibitem{FON82}
P.~Fong and B.~Srinivasan.
\newblock The blocks of finite general linear and unitary groups.
\newblock {\em Invent. math.}, 69:109--153, 1982.

\bibitem{GAP4}
{GAP -- Groups, Algorithms, and Programming, Version 4.08.10}.
\newblock \url{https://www.gap-system.org}, 2018.

\bibitem{GEC91}
M.~Geck and G.~Hiss.
\newblock Basic sets of {B}rauer characters of finite groups of {L}ie type.
\newblock {\em J. reine angew. Math.}, 418:173--188, 1991.

\bibitem{GRU18}
J.~Gruber.
\newblock {\em Brauer's $k({B})$-{C}onjecture for $\text{GL}_n(q)$ and
  $\text{GU}_n(q)$ at $\ell =2$}.
\newblock Master thesis, Technische Universit\"at Kaiserslautern, 2018.

\bibitem{JAM84}
G.~James and A.~Kerber.
\newblock {\em The {R}epresentation {T}heory of the {S}ymmetric {G}roup}.
\newblock Cambridge University Press, Cambridge, 1984.

\bibitem{MAL17}
G.~Malle.
\newblock On a minimal counterexample to {B}rauer's $k({B})$-conjecture.
\newblock {\em Israel J. Math}, 228:527--556, 2018.

\bibitem{MAL18}
G.~{M}alle.
\newblock On the number of characters in blocks of quasi-simple groups.
\newblock {\em Algebra and Representation Theory}, 2019.
\newblock \href {http://dx.doi.org/10.1007/s10468-019-09860-0}
  {\path{doi:10.1007/s10468-019-09860-0}}.

\bibitem{MAL06}
G.~Malle and G.~Navarro.
\newblock Inequalities for some blocks of finite groups.
\newblock {\em Arch. Math.}, 87:390--399, 2006.

\bibitem{MIC83}
G.~O. Michler and J.~B. Olsson.
\newblock {C}haracter correspondences in finite general linear, unitary and
  symmetric groups.
\newblock {\em Math. Z.}, 184:203--233, 1983.

\bibitem{OLS76}
J.~B. Olsson.
\newblock Mc{K}ay numbers and heights of characters.
\newblock {\em Math. Scand.}, 38:25--42, 1976.

\bibitem{OLS84}
J.~B. Olsson.
\newblock {O}n the number of characters in blocks of finite general linear,
  unitary and symmetric groups.
\newblock {\em Math. Z.}, 186:41--47, 1984.

\end{thebibliography}
\bigskip
\normalsize

Institut f\"ur Mathematik, Friedrich-Schiller-Universit\"at, 
Ernst-Abbe-Platz 2,
07743 Jena, Germany \\
E-mail: sofia.bettina.brenner@uni-jena.de


\end{document}